\newcommand{\pt}{\partial}
\newcommand{\eps}{\varepsilon}
\newtheorem{theorem}{Theorem}[section]
\newtheorem{definition}{Definition}[section]
\newtheorem{lemma}{Lemma}[section]
\renewcommand\theequation{{\thesection}%
	.{\arabic{equation}}}
\newcommand{\be}{\begin{equation}}
\newcommand{\ee}{\end{equation}}
\newcommand{\ba}{\begin{array}}
    \newcommand{\ea}{\end{array}}
\newcommand{\bea}{\begin{eqnarray}}
\newcommand{\eea}{\end{eqnarray}}
\newcommand{\beas}{\begin{eqnarray*}}
	\newcommand{\eeas}{\end{eqnarray*}}
\definecolor{color1}{RGB}{249,255,195}
\definecolor{color2}{RGB}{0,176,80}
\title{A sixth-order compact time-splitting Fourier pseudospectral method for the Dirac equation}
\author[1,2]{Weiguo Gao}
\ead{wggao@fudan.edu.cn}
\author[1]{Zhansi He}
\ead{23210180094@m.fudan.edu.cn}
\author[1]{Jia Yin}
\ead{jiayin@fudan.edu.cn}
\address[1]{School of Mathematical Sciences, Fudan University, Shanghai 200433, China}
\address[2]{School of Data Science, Fudan University, Shanghai 200433, China}
\begin{document}
    
    \begin{frontmatter}
		
    \begin{abstract}
    In this paper, we propose a novel sixth-order compact time-splitting scheme, denoted as $ S_{6\text{c}}$, for solving the Dirac equation in the absence of external magnetic potentials. This method is easy to implement, and it provides a substantial reduction in computational complexity compared to the existing sixth-order splitting schemes. By incorporating a time-ordering technique, we also extend $S_{6\text{c}}$ to address problems with time-dependent potentials. Comprehensive comparisons with various time-splitting methods show that $S_{6\text{c}}$ exhibits significant advantages in terms of both precision and efficiency. Moreover, numerical results indicate that $S_{6\text{c}}$ maintains the super-resolution property for the Dirac equation in the nonrelativistic regime in the absence of external magnetic potentials.
    \end{abstract}

	\end{frontmatter}

	\section{Introduction}\setcounter{equation}{0}
	Formulated by Paul Dirac in 1928, the Dirac equation stands as a foundational achievement in modern physics, as it integrates quantum mechanics with special relativity~\cite{Dirac}. This fundamental equation, which describes the behavior of relativistic spin-$1/2$ particles such as the electrons, extends applications across diverse scientific disciplines.~In condensed matter physics, the equation is instrumental in probing the structural and dynamical properties of two-dimensional materials such as graphene and graphite~\cite{FEF1, FEF2, Castro, Novo1, Novo2}. Moreover, it serves as a key theoretical tool for investigating relativistic effects in molecules subjected to intense fields like attosecond laser pulses~\cite{Boada,Fillion}. The Dirac equation is also crucial for studying foundational quantum phenomena, including the quantum Hall effect~\cite{Du, Goldman} and the physics governing topological insulators~\cite{Chen, Xia}.

	For simplicity, we only consider the Dirac equation in one and two-dimensional (1D and 2D) spaces in this paper. In 1D and 2D, the decoupled non-dimensionalized Dirac equation with electromagnetic potentials takes the form:
    \begin{equation}
	\label{LDirac1d2d}
	i\delta\partial_t\Phi = \left(-i\dfrac{\delta}{\eps}\sum_{j = 1}^{d}\sigma_j\partial_j + \dfrac{\nu}{\eps^2}
	\sigma_3 \right)\Phi + \left(V(t,\mathbf{x})I_2 - \sum_{j = 1}^{d}A_j(t,\mathbf{x})\sigma_j\right)\Phi, \quad \mathbf{x} \in\mathbb{R}^d,
	\end{equation}
    where $d \in \{1,2\}$ denotes the spatial dimension, and the initial condition is
    \begin{equation} \label{initial}
        \Phi(t=0,\mathbf{x}) = \Phi_0(\mathbf{x}),\quad \mathbf{x}\in \mathbb{R}^d.
    \end{equation}
    In the Dirac equation (\ref{LDirac1d2d}), $i = \sqrt{-1}$, $\Phi:=\Phi(t,\mathbf{x})=(\phi_1(t,\mathbf{x}),\phi_2(t,\mathbf{x})) \in \mathbb{C}^2$  is the two-component wave function, $t$ is the time, $\mathbf{x}:=(x_1,\cdots,x_d)^T $ is the spatial coordinate and $\partial_j$ represents $\partial_{x_j}$ for $j = 1, 2$. The dimensionless parameters $\delta,~\nu,~\eps$ satisfy $0 < \delta,~\nu,~\eps \le 1$ with specific physical meanings described in~\cite{Bao2019}. Besides, $V(t, \mathbf{x})\in\mathbb{R}$ is the electric potential and $\mathbf{A}(t, \mathbf{x}) = (A_1(t, \mathbf{x}), \cdots, A_d(t, \mathbf{x}))^T\in\mathbb{R}^d$ stands for the magnetic potential. $\sigma_j$ ($j=1, 2, 3$) in the equation are the Pauli matrices with the definition:
	\begin{equation}
	\label{Pauli}
	\sigma_1 = \begin{pmatrix} 0 & 1\\ 1 & 0 \end{pmatrix}, \quad 
	\sigma_2 = \begin{pmatrix} 0 & -i \\ i & 0\end{pmatrix}, \quad
	\sigma_3 = \begin{pmatrix} 1 & 0 \\ 0 & -1 \end{pmatrix}.
	\end{equation}
    Moreover, the Dirac equation (\ref{LDirac1d2d}) conserves the total mass as \cite{Bao2017}
    \begin{equation}
        \Vert \Phi(t,\cdot) \Vert ^2 := \int_{\mathbb{R}^d} |\Phi(t,\mathbf{x})|^2 d\mathbf{x} = \int_{\mathbb{R}^d} \sum_{j=1}^{2} |\phi_j(t,\mathbf{x})|^2 d \mathbf{x} \equiv \Vert \Phi(0,\cdot) \Vert ^2 = \Vert \Phi_0 \Vert ^2, t \ge 0.
    \end{equation}
    
	Extensive research has been devoted to both analytical and numerical solutions for the Dirac equation. On the analytical front, studies have focused on the existence and multiplicity of bound states and standing wave solutions, as detailed in~\cite{Das1993,Das1989,Esteban,Gesztesy,Gross,P} and references therein. On the numerical side, a variety of efficient and accurate methods have been developed and analyzed~\cite{Antoine,Bao2004}. Notable examples include the finite difference time domain (FDTD) method~\cite{Antoine2014,Braun,Hammer}, the time-splitting Fourier pseudospectral (TSFP) method~\cite{Bao2017,Braun,Fillion2012,Huang}, the exponential wave integrator Fourier pseudospectral (EWI-FP) method~\cite{Bao2017}, and the Gaussian beam method~\cite{Wu}. Furthermore, the Dirac equation has been widely studied in various physical regimes, such as the nonrelativistic regime~\cite{Bao2016,Bao20166,Bao2020,Cai} and the semiclassical regime~\cite{Ma}, through a combination of analytical and numerical approaches.

    In this paper, we focus on the time-splitting Fourier pseudospectral methods, which employ the Fourier discretization in space and time-splitting~\cite{McLachlan} for temporal integration. As noted in the literature~\cite{Bao2017,Bao2019}, existing splitting methods for solving the Dirac equation are limited to at most fourth-order accuracy. Among these methods, representative schemes include the first-order Lie-Trotter splitting ($S_1$)~\cite{Trotter}, the second-order Strang splitting ($S_2$)~\cite{Strang}, as well as several fourth-order variants, e.g., the Forest-Ruth splitting ($S_4$)~\cite{Forest,Suzuki,Yoshida}, the partitioned Runge-Kutta splitting ($S_{4\text{RK}}$)~\cite{Blanes,Geng} and the compact splitting ($S_{4\text{c}}$)~\cite{Chin1997}. While higher-order methods, such as those based on Suzuki fractal decomposition~\cite{Suzuki} or Yoshida symplectic integrators~\cite{Yoshida}, can  be applied straightforwardly, they require a large number of composition steps, resulting in prohibitive computational cost in practice.

    To achieve higher efficiency without sacrificing accuracy, in this paper we introduce a novel sixth-order compact time-splitting method ($S_{\text{6c}}$). The construction is inspired by the observation that, for the Dirac equation without magnetic potentials, the double commutator taken between the time-split operators vanishes~\cite{Bao2019,Yin2021}. This property leads to a compact scheme which is easy to implement in the absence of magnetic potentials. Moreover, by incorporating the time-ordering technique~\cite{Chin2002,Suzuki}, the proposed method can be extended to handle time-dependent potentials. 
    We will show through comprehensive numerical experiments that the new method is substantially more efficient than existing sixth-order compositions and delivers higher accuracy than conventional fourth-order schemes. Furthermore, it maintains the super-resolution property of the time-splitting methods for the Dirac equation without magnetic potentials in nonrelativistic regime.

    The rest of the paper is organized as follows. In Section \ref{sec2}, we first review the time-splitting methods for differential equations and then display the explicit construction procedures of the new sixth-order splitting scheme. Based on this construction, in Section \ref{sec3}, we propose a novel sixth-order compact time-splitting method for the Dirac equation with both time-independent and time-dependent electric potentials. Substantial numerical experiments and results are presented in Section \ref{sec4}, which demonstrate the advantages of the proposed $S_{6\text{c}}$ in terms of efficiency and accuracy. Finally, some concluding remarks are drawn in Section \ref{sec5}.

    \section{The sixth-order compact time-splitting method ($S_{6\text{c}}$)}\setcounter{equation}{0}\label{sec2}
    
    In this section, we review the time-splitting methods and introduce several properties which are necessary to construct the novel sixth-order splitting method for the Dirac equation without magnetic potentials. Based on these properties, we give the explicit $S_\text{6c}$ scheme in the last subsection.

    \subsection{Review of the time-splitting methods}
    The following discussion of the time-splitting methods is based on the differential equation:
    \begin{equation}
    \label{partial}
    \pt_tu(t, \mathbf{x}) = (T + W)u(t, \mathbf{x}),
    \end{equation}
    where $T$ and $W$ are two time-independent operators. With the initial condition 
    $u(0, \mathbf{x}) = u_0(\mathbf{x})$, we could represent the exact solution at time $\tau$ as
    \begin{equation}
    \label{exact}
     u(\tau, x) = e^{\tau(T + W)}u_0(x).
     \end{equation}
     When compared to the operators $e^{\tau T}$ and $e^{\tau W}$, $e^{\tau(T + W)}$ is not so easy to compute, we can apply the time-splitting technique, that is to approximate the operator $e^{\tau(T + W)}$ by
    \begin{equation}  \label{solution}
    e^{\tau(T + W)} \approx \Pi_{j = 1}^m e^{a_j\tau T}e^{b_j\tau W}, \quad m\in\mathbb{N}^*.
    \end{equation}
    For the convenience of notation, we denote the right hand side of the equation (\ref{solution}) as:
    \begin{align} 
        S(\tau) &:= \prod_{j=1}^{m} e^{a_j \tau T}e^{b_j \tau W}  \notag\\
        \label{order}   & = e^{\tau(T+W) + \mathcal{O}(\tau^{n+1})}\\  
        \label{coefficient}   & = e^{\sum_{k=1}^{n} \tau^k\gamma_k + \mathcal{O}(\tau^{n+1})} 
    \end{align}
    where (\ref{order}) means that $S(\tau)$ constructs an $n{\text{th}}$-order splitting method, and  (\ref{coefficient}) shows the asymptotic expansion of $S(\tau)$ with respect to $\tau$, where $\gamma_k$ is the coefficient of the $\tau^k$ term.

    \begin{definition}
        An algorithm $S(\tau)$ is \textbf{symmetric} or \textbf{self-adjoint} if 
        \begin{equation*}
            S(\tau)S(-\tau) = 1
        \end{equation*}
        for all $\tau$.
    \end{definition}

    \begin{theorem} \label{th1}
        If $S(\tau)$ is symmetric, then $\gamma_{2k} = 0 $ for all $k$, and $S(\tau)$ is a scheme with even order.
    \end{theorem}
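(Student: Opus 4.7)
The plan is to rewrite the symmetry hypothesis in terms of the formal-exponential representation (\ref{coefficient}) and extract a parity condition on the coefficients $\gamma_k$. Concretely, I would set $S(\tau) = \exp(F(\tau))$ where $F(\tau) := \sum_{k\ge 1}\tau^k \gamma_k$ is viewed as a (noncommutative) operator-valued formal power series in $\tau$ with $F(0) = 0$. Since $F(\tau)$ trivially commutes with itself, the identity $\exp(F(\tau))\exp(-F(\tau)) = I$ is valid in this formal setting, giving $S(\tau)^{-1} = \exp(-F(\tau))$.

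Next, I would rewrite the self-adjointness hypothesis $S(\tau)S(-\tau) = 1$ as $S(-\tau) = S(\tau)^{-1}$, which under the identifications above becomes $\exp(F(-\tau)) = \exp(-F(\tau))$. Because both exponents are formal series with vanishing constant term, taking the formal logarithm is legitimate and yields $F(-\tau) = -F(\tau)$. Expanding each side coefficient-by-coefficient, $\sum_k (-1)^k \tau^k \gamma_k = -\sum_k \tau^k \gamma_k$, so $((-1)^k + 1)\gamma_k = 0$ for every $k$; the even case $k = 2j$ forces $\gamma_{2j} = 0$, which is exactly the first claim.

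The even-order statement then follows by bookkeeping. By (\ref{order}), an $n$th-order scheme satisfies $\gamma_1 = T+W$ and $\gamma_2 = \cdots = \gamma_n = 0$, with the leading truncation coefficient $\gamma_{n+1}$ nonzero (otherwise the order would be strictly larger than $n$). Since every even-indexed $\gamma_k$ vanishes by the previous step, the index $n+1$ must be odd, forcing $n$ to be even. The main obstacle I anticipate is the step $\exp(F(-\tau)) = \exp(-F(\tau)) \Rightarrow F(-\tau) = -F(\tau)$: it is routine in the formal-power-series framework, yet needs care because the $\gamma_k$ are operators that do not generally commute with each other, so one must argue via the uniqueness of the formal logarithm rather than via a scalar-style cancellation.
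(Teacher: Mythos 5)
Your argument is correct: writing $S(\tau)=\exp\bigl(F(\tau)\bigr)$ with $F(\tau)=\sum_{k\ge 1}\tau^k\gamma_k$, using injectivity of the formal logarithm on constant-term-zero series to pass from $\exp(F(-\tau))=\exp(-F(\tau))$ to $F(-\tau)=-F(\tau)$, and then reading off parity of the coefficients is exactly the standard proof of this fact (it is essentially the argument of Yoshida and Suzuki), and your bookkeeping for the even-order conclusion --- the leading nonzero term $\gamma_{n+1}$ must have odd index, hence $n$ is even --- is sound. Note that the paper itself does not prove this theorem but simply cites those references, so there is no in-paper proof to diverge from; your write-up supplies the omitted argument correctly.
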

    \noindent Theorem \ref{th1} plays a crucial role in constructing high-order splitting methods, and we refer to~\cite{Suzuki,Yoshida} for the proof. To construct higher-order time-splitting schemes, it is also necessary to recall the Baker-Campbell-Hausdorff (BCH) formula:
    \begin{definition}[the BCH formula]
        For any non-commutative operators $X$ and $Y$, the product of them, $\exp(X)\exp(Y)$, can be expressed as a single exponential operator
        \begin{equation*}
            \exp(X)\exp(Y) = \exp(Z),
        \end{equation*}
        where
        \begin{align}  \label{BCH}
            Z =& X + Y + \frac{1}{2}[X,Y] + \frac{1}{12}([X,[X,Y]]+[Y,[Y,X]]) \notag\\
              &+\frac{1}{24}[X,[Y,[Y,X]]]-\frac{1}{720}([Y,[Y,[Y,[Y,X]]]]+[X,[X,[X,[X,Y]]]]) \notag\\
              &+ \frac{1}{360}([Y,[X,[X,[X,Y]]]]+[X,[Y,[Y,[Y,X]]]]) \notag\\
              &+\frac{1}{120}([X,[X,[Y,[Y,X]]]]+[Y,[Y,[X,[X,Y]]]])+\cdots
        \end{align}
    \end{definition}
    \noindent Here we use the notation of the commutator $[X, Y] = X Y - Y X$. 
    
    Based on the above properties, Suzuki introduced the following \textbf{sixth-order Suzuki fractal splitting ($S_6^*$)} method~\cite{Suzuki} 
    \begin{align} \label{eq:S6f} 
    S^*_{6}(\tau) &=[S^*_{3}(p_3\tau)]^2S^*_{3}((1-4p_3)\tau)[S^*_{3}(p_3\tau)]^2, \notag\\
    S^*_{3}(\tau) &=[S_{2}(p_2\tau)]^2S_{2}((1-4p_2)\tau)[S_{2}(p_2\tau)]^2,
    \end{align}
    where $S_2(\tau)$ represents the second-order Strang splitting method~\cite{Strang}
    \begin{equation} 
     S_2(\tau)= e^{\frac{\tau}{2} W}e^{\tau T}e^{\frac{\tau}{2} W}.
    \end{equation}
    Here and in the following discussion we always suppose that the computation 
    of $e^{\tau W}$ is much easier than that of $e^{\tau T}$ without loss of generality. The coefficients in~\eqref{eq:S6f} are given by
    \begin{align*}
    p_2 &= 0.414490771794375737142354062860\cdots\\
    p_3 &= 0.373065827733272824775863041073\cdots.
    \end{align*}
    To reduce computational cost, Yoshida introduced  symmetric \textbf{sixth-order Yoshida symplectic splitting ($S_6$) } method~\cite{Yoshida} 
    \begin{equation} 
    S_6(\tau):=S_2(\omega_3\tau)S_2(\omega_2\tau) S_2(\omega_1 \tau)S_2(\omega_0\tau)S_2(\omega_1\tau) S_2(\omega_2\tau) S_2(\omega_3 \tau),
    \end{equation}
    with coefficients $\omega_1,\dots,\omega_3$ listed in Table \ref{tab1}, and $\omega_0$ is determined by
    \begin{equation}
        \omega_0 = 1 - 2(\omega_1 + \omega_2 + \omega_3).
    \end{equation}
    
    \begin{table}[t]
    \setlength{\tabcolsep}{10pt}
    \centering
    \begin{tabular}{cccc}
    \hline
         & solution A & solution B & solution C  \\
    \hline
      $\omega_1$  &   -1.17767998417887   &  
      -2.13228522200144  & 0.00152886228424922  \\
    
      $\omega_2$  &   0.235573213359357     &
      0.00426068187079180  & -2.14403531630539   \\
    
      $\omega_3$  &   0.784513610477560     & 
      1.43984816797678   & 1.44778256239930 \\
    \hline
    \end{tabular}
    \caption{The coefficients in the 6th-order Yoshida symplectic integrator.}
    \label{tab1}
    \end{table}

    \subsection{The construction of $S_{\text{6c}}$ for the Dirac equation}
    Since the computational costs of both $S_6^*$ and $S_6$ are prohibitive, we expect to design a more efficient sixth-order time-splitting scheme for the Dirac equation. In the following sections, we only consider the Dirac equation without magnetic potentials, i.e.,
    \begin{equation}  \label{Dirac1d2d}
        i\delta \partial_t \Phi = \left(-i\frac{\delta}{\eps}\sum_{j=1}^{d} \sigma_j \partial_j + \frac{\nu}{\eps^2}\sigma_3 \right) \Phi + V(t,\mathbf{x})I_2  \Phi, \quad \mathbf{x}\in \mathbb{R}^d,\quad d \in \{1,2\}.
    \end{equation}
    From the computation of the double commutators with both time-independent~\cite{Bao2019} and time-dependent~\cite{Yin2021} potentials, we can directly obtain the following lemma. 
    \begin{lemma}  \label{lemma1} 
    For the Dirac equation (\ref{Dirac1d2d}) in 1D and 2D with time-independent potentials, i.e., $V(t,\mathbf{x})\equiv V(\mathbf{x})$, by taking 
    \begin{equation}  \label{2.10}
        T = -\dfrac{1}{\eps}\sum_{j = 1}^{d}\sigma_j\partial_j - \dfrac{i\nu}{\delta\eps^2}\sigma_3 ,\quad W = -\frac{i}{\delta}V(\mathbf{x})I_2, \quad d \in \{1,2\},
    \end{equation}
    we have 
    \begin{equation}
        [W, [T, W ]] = 0.
    \end{equation}
    \end{lemma}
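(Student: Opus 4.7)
The plan is to compute the inner commutator $[T,W]$ first, show that the result is a pure multiplication operator (i.e.\ that no differential operator survives), and then conclude immediately that $[W,[T,W]]=0$ because $W$ is proportional to $V(\mathbf{x})I_2$ and a scalar-valued multiplication operator commutes pointwise with any other multiplication operator whose matrix part is built out of $\sigma_j$ and $I_2$.

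To carry out the first step, I would split $T$ into the first-order differential part $T_1 := -\tfrac{1}{\eps}\sum_{j=1}^{d}\sigma_j\partial_j$ and the zeroth-order matrix part $T_2 := -\tfrac{i\nu}{\delta\eps^2}\sigma_3$. Since $W$ is multiplication by the scalar function $-\tfrac{i}{\delta}V(\mathbf{x})$ times $I_2$, and $I_2$ commutes with every $2{\times}2$ matrix, $[T_2,W]$ vanishes trivially: the $\sigma_3$ factor commutes with the scalar multiplication $V(\mathbf{x})I_2$. The only surviving contribution is $[T_1,W]$, and a direct Leibniz-rule computation on a test spinor gives $[T_1,W] = \tfrac{i}{\delta\eps}\sum_{j=1}^{d}(\partial_j V)\,\sigma_j$. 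In particular, after one commutator all derivatives have been absorbed into the coefficients $\partial_j V$, and the result is again a matrix-valued multiplication operator.

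With this in hand, the outer commutator $[W,[T,W]]$ is a commutator between two multiplication operators: one is the scalar function $-\tfrac{i}{\delta}V(\mathbf{x})$ tensored with $I_2$, and the other is the scalar-valued linear combination $\tfrac{i}{\delta\eps}\sum_{j=1}^{d}(\partial_j V)\sigma_j$. Because $I_2$ commutes with each $\sigma_j$ and because the scalar functions $V$ and $\partial_j V$ commute pointwise with one another, these two matrix-valued multiplication operators commute, giving $[W,[T,W]]=0$ as claimed.

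The only real work is bookkeeping the prefactors $-i/\delta$, $-1/\eps$, and $-i\nu/(\delta\eps^2)$ correctly in the Leibniz calculation of $[T_1,W]$; there is no conceptual obstacle. The structural reason for the identity is that a single commutator with the multiplication operator $W$ exhausts every derivative inside $T$, and the remaining matrix-valued multiplication operators automatically commute as soon as one factor is a scalar multiple of $I_2$, which is precisely the situation in the absence of magnetic potentials.
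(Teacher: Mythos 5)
Your proposal is correct, and in fact it supplies more detail than the paper does: the paper does not prove Lemma \ref{lemma1} at all, but obtains it ``directly'' by citing the double-commutator computations in the references for the time-independent and time-dependent cases. Your argument is the self-contained version of exactly that computation: splitting $T$ into the differential part and the constant matrix part, noting $[\sigma_3, V I_2]=0$, and using the Leibniz rule to get $[T,W]=\frac{i}{\delta\eps}\sum_{j=1}^d (\partial_j V)\,\sigma_j$, after which the outer commutator vanishes because $W$ is a scalar function times $I_2$ and therefore commutes with any matrix-valued multiplication operator. The prefactor bookkeeping checks out. One tiny wording slip: $\sum_j(\partial_j V)\sigma_j$ is not ``scalar-valued'' but a matrix-valued multiplication operator with scalar-function coefficients; your subsequent reasoning treats it correctly, so the conclusion $[W,[T,W]]=0$ stands. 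Your closing structural remark --- that a single commutator with $W$ exhausts the derivatives in $T$, and that the absence of magnetic potentials is what makes $W$ proportional to $I_2$ --- is precisely the mechanism the paper's construction of $S_{6\text{c}}$ relies on.
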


    To design higher order splitting schemes, we need to introduce several properties of the commutators. Here we denote the higher order commutators as $[X, Y, Z] := [X, [Y, Z] ]$. 
    
    For simplicity, the following discussion is only based on time-independent potentials. However, it can be easily extended to time-dependent cases with the help of the time-ordering technique \cite{Chin2002,Suzuki}. We first introduce several properties of the commutators.

    \begin{lemma} \label{lemma3}
    We have the following properties for the commutators: 
        \begin{enumerate}
            \item \textbf{Skew-Commutativity}: $[X,Y] = -[Y,X]$.     
            
            \item \textbf{Bilinearity}:
            \begin{align*}
                &[a_1X_1 + a_2X_2, Y] = a_1[X_1,Y] + a_2[X_2,Y] \\
                &[X, b_1Y_1 + b_2Y_2] = b_1[X,Y_1] + b_2[X,Y_2]
            \end{align*}
            for any $a_j, b_j~(j = 1,2)$ and operators $X_j, Y_j~(j =1,2)$.
            
            \item  \textbf{Jacobi Identity}:
            \begin{equation*}
                [X,Y,Z] + [Y,Z,X] + [Z,X,Y] = 0.
            \end{equation*}
            
            \item \textbf{The quadruple commutators satisfy}:
            \begin{equation*}
                [X,Y,Z,W] + [Y,Z,W,X] + [Z,W,X,Y] + [W,X,Y,Z] = [[X,Z],[Y,W]].
            \end{equation*}
        \end{enumerate}
    \end{lemma}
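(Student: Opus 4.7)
The plan is to handle the four items in turn, with the first three being essentially one-line consequences of the definition and the fourth requiring genuine algebraic work. For (1), $[X,Y] = XY - YX = -(YX - XY) = -[Y,X]$ is immediate. For (2), bilinearity is inherited from distributivity and linearity of the underlying associative product. For (3), the Jacobi identity follows by expanding each of the three double commutators into four associative products; the resulting twelve length-three monomials cancel in six pairs by direct inspection.

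The core of the proof is (4). I would proceed by direct expansion. First compute
\[
[X,[Y,[Z,W]]] = XYZW - XYWZ - XZWY + XWZY - YZWX + YWZX + ZWYX - WZYX,
\]
and apply the cyclic shift $(X,Y,Z,W) \mapsto (Y,Z,W,X)$ three times to obtain the remaining three quadruple commutators on the left-hand side, giving $32$ signed products of four operators in total. Indexing these by the $24$ permutations of $(X,Y,Z,W)$, I expect $12$ permutations to receive exactly two opposite-signed contributions from two different cyclic terms (hence to cancel), while the remaining $8$ permutations each receive a single contribution of sign $\pm 1$. A term-by-term check should then match these residual monomials with the expansion
\[
[[X,Z],[Y,W]] = XZYW - XZWY - ZXYW + ZXWY - YWXZ + YWZX + WYXZ - WYZX.
\]

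The main obstacle is purely bookkeeping: tracking the signs of $32$ products across four cyclic shifts and verifying that the $24$ unwanted ones really do cancel in the expected pairs requires careful accounting. I would organize the verification as a sign table indexed by permutations, which makes the cancellation pattern transparent. A more structural alternative is to apply the derivation rule $[X,[A,B]] = [[X,A],B] + [A,[X,B]]$ (itself a restatement of Jacobi) repeatedly to each quadruple commutator and then use Jacobi once more inside the cyclic sum to eliminate the triple-bracket residuals; this is more elegant but requires choosing the correct sequence of expansions, whereas the mechanical route is less prone to sign errors.
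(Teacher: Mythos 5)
Your plan is correct and coincides with the paper's own argument: the paper likewise treats items (1)--(3) as standard and proves (4) by exactly this direct expansion of the four quadruple commutators, grouping the eight surviving monomials as $(XZ-ZX)(YW-WY)+(YW-WY)(ZX-XZ)=[[X,Z],[Y,W]]$. Your anticipated bookkeeping is accurate (twelve opposite-signed pairs cancel and the remaining eight terms reproduce the expansion of $[[X,Z],[Y,W]]$), so carrying out the sign table completes the proof along the same lines as the paper.
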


    \begin{proof}
        We only prove the last property. By definition:
        \begin{align*}
            [X,[Y,[Z,W]]] =& XYZW - XYWZ - XZWY + XWZY\\
            &- YZWX + YWZX + ZWYX - WZYX, \\
            [Y,[Z,[W,X]]] =& YZWX - YZXW - YWXZ + YXWZ\\
            &- ZWXY + ZXWY + WXZY - XWZY, \\
            [Z,[W,[X,Y]]] =& ZWXY - ZWYX - ZXYW + ZYXW\\
            &- WXYZ + WYXZ + XYWZ - YXWZ, \\
            [W,[X,[Y,Z]]] =& WXYZ - WXZY - WYZX + WZYX\\
            &- XYZW + XZYW + YZXW - ZYXW.
        \end{align*}
        
        Adding up all the above equations, we have
        \begin{align*}
            &[X,Y,Z,W] + [Y,Z,W,X] + [Z,W,X,Y] + [W,X,Y,Z]\\
            =& (XZ - ZX)(YW - WY) + (YW - WY)(ZX - XZ) \\
            =& [X,Z][Y,W] + [Y,W][Z,X] \\
            =& [[X,Z],[Y,W]],
        \end{align*}
        which completes the proof.
    \end{proof}

    \begin{lemma} \label{lemma4}
        If $[Y,X,Y] = 0$, then 
        \begin{enumerate}
        \item \label{(1)}$[Y,Y,X] = -[Y,X,Y] = 0$;
        \item \label{(2)}$[X,Y,Y,X] = [X,[Y,Y,X]] = 0$; 
        \item $[Y,Y,Y,Y,X] = [Y,Y,[Y,Y,X]] = 0$;
        \item $[X,Y,Y,Y,X] = [X,Y,[Y,Y,X]] = 0$;
        \item $[X,X,Y,Y,X] = [X,X,[Y,Y,X]] = 0$;
        \item \label{(6)}$[Y,Y,X,X,Y] = [X,Y,X,X,Y] = 0$. 
        \end{enumerate}
    \end{lemma}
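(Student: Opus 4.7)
The plan is to prove the six identities in order, since each one reduces either to the hypothesis $[Y,X,Y]=0$ or to an earlier item on the list. For (1), skew-commutativity from Lemma \ref{lemma3} gives the claim in a single line:
\begin{equation*}
[Y,Y,X] = [Y,[Y,X]] = -[Y,[X,Y]] = -[Y,X,Y] = 0.
\end{equation*}

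For items (2)--(5), I would exploit the observation that each expression can be rewritten so that $[Y,Y,X]$ appears as its innermost subexpression; bilinearity of the commutator then collapses any outer commutator of $0$ to $0$. Concretely, $[X,Y,Y,X] = [X,[Y,Y,X]] = [X,0] = 0$ settles (2); items (3)--(5) follow analogously by writing $[Y,Y,Y,Y,X] = [Y,[Y,[Y,Y,X]]]$, $[X,Y,Y,Y,X] = [X,[Y,[Y,Y,X]]]$, and $[X,X,Y,Y,X] = [X,[X,[Y,Y,X]]]$, and substituting the zero from (1) before applying bilinearity (possibly more than once).

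The main obstacle is item (6), because the hypothesis only forces $Y$ to commute with $[X,Y]$, not $X$, so $[X,[X,Y]]$ need not be zero and the same shortcut cannot be reused directly. The key step I would prove is the auxiliary identity $[Y,X,X,Y] = 0$. Introducing the abbreviation $Z := [X,Y]$, the hypothesis reads $[Y,Z]=0$, and I would apply the Jacobi identity (property 3 of Lemma \ref{lemma3}) to the triple $(Y,X,Z)$:
\begin{equation*}
[Y,[X,Z]] + [X,[Z,Y]] + [Z,[Y,X]] = 0.
\end{equation*}
The second summand vanishes since $[Z,Y] = -[Y,Z] = 0$, and the third equals $[Z,-Z]=0$, leaving $[Y,[X,Z]] = [Y,X,X,Y] = 0$. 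Both claims in (6) then follow by the same bilinearity argument as before: $[Y,Y,X,X,Y] = [Y,[Y,X,X,Y]] = [Y,0] = 0$ and $[X,Y,X,X,Y] = [X,[Y,X,X,Y]] = [X,0] = 0$. The whole argument therefore rests on two ingredients already available in the excerpt, namely the hypothesis itself and the Jacobi identity, while property 4 of Lemma \ref{lemma3} is not needed here.
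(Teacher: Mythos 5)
Your proof is correct, and items (1)--(5) are handled exactly as the paper intends (the paper simply declares them straightforward). The interesting divergence is in item (6): both arguments hinge on first showing the auxiliary identity $[Y,X,X,Y]=0$, but you obtain it from the Jacobi identity applied to the triple $(Y,X,[X,Y])$, where the hypothesis kills $[X,[[X,Y],Y]]$ and skew-symmetry kills $[[X,Y],[Y,X]]$, whereas the paper obtains it from the quadruple-commutator identity (property 4 of Lemma \ref{lemma3}), writing $[Y,X,X,Y]+[X,X,Y,Y]+[X,Y,Y,X]+[Y,Y,X,X]=[[Y,X],[X,Y]]=0$ and cancelling the three terms that vanish by $[Y,Y]=[X,X]=0$ and by item (2). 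Your route is the more elementary one: it needs only skew-commutativity, bilinearity and Jacobi, so the four-term identity that Lemma \ref{lemma3} proves by brute-force expansion becomes unnecessary for this lemma, as you note. The paper's route, on the other hand, puts the machinery it has just set up to work and makes the cancellation structure among the four quadruple commutators explicit; the two derivations are of essentially equal length, and either yields the two conclusions of item (6) by the same final bilinearity step $[Y,[Y,X,X,Y]]=[X,[Y,X,X,Y]]=0$.
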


    \begin{proof}
        The first five equations are straightforward to get, so here we only prove 6. By Lemma \ref{lemma3}, we have
        \begin{equation} 
            [Y,X,X,Y]+[X,X,Y,Y]+[X,Y,Y,X]+[Y,Y,X,X]=[[Y,X],[X,Y]] = 0.
        \end{equation}
        Notice that $[X,X,Y,Y]=[Y,Y,X,X]=0$ and $[Y,X,X,Y] = -[X,Y,Y,X] = 0$,
        thus
        \begin{align*}
            [Y,Y,X,X,Y] &= [Y,[Y,X,X,Y]] = 0,\\
            [X,Y,X,X,Y] &= [X,[Y,X,X,Y]] = 0.
        \end{align*}
        This completes the proof.
    \end{proof} 
    
    \begin{theorem}  \label{th2}
    Consider a symmetric decomposition of the form
    \begin{equation}   \label{2.15}
        S^{(n)}(\tau) := \cdots e^{\tau c_4 W} e^{\tau c_3 T} e^{\tau c_2 W}
        e^{\tau c_1 T} e^{\tau c_0 W}e^{\tau c_1 T} e^{\tau c_2 W}
        e^{\tau c_3 T} e^{\tau c_4 W}\cdots ,
    \end{equation}
    where $c_0,c_1,c_2,\cdots$ are any given coefficients and the operators $W$ and $T$ are given by (\ref{2.10}).
    Then $S^{(n)}(\tau)$ admits the expansion
    \begin{align*}
        S^{(n)}(\tau) = e^{
        \tau\bigl(a_1T+a_2W\bigr) 
        + \tau^3 a_3[T,W,T] + \tau^5\bigl(a_4[T,T,T,T,W]+a_5[W,T,T,T,W]\bigr)
        + \mathcal{O}(\tau^7)}, 
    \end{align*}
    in which the coefficients $a_1,\dots,a_5$ are polynomials in terms of $c_0,c_1,c_2,\cdots$.
    \end{theorem}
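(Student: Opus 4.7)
The plan is to collapse the product $S^{(n)}(\tau)$ into a single exponential $\exp(Z(\tau))$ by repeated application of the BCH formula, expand $Z(\tau)$ in powers of $\tau$, and then use the symmetry of the composition together with the identity $[W,[T,W]]=0$ from Lemma \ref{lemma1} to eliminate all but the advertised structures. Since the pattern of exponents in \eqref{2.15} is palindromic, $S^{(n)}(\tau)$ is self-adjoint, so by Theorem \ref{th1} every even-power contribution in $Z(\tau) = \sum_k \tau^k \gamma_k$ vanishes; it therefore suffices to identify $\gamma_1$, $\gamma_3$, and $\gamma_5$ and to verify that each coefficient is a polynomial in the $c_j$.

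The first two pieces are short. The term $\gamma_1$ merely collects the zeroth-order contributions of each factor, yielding $\gamma_1 = a_1 T + a_2 W$ with $a_1$ and $a_2$ linear in the $c_j$. At order $\tau^3$, the only length-three nested commutators of $T$ and $W$ are, modulo antisymmetry and the Jacobi identity, $[T,T,W]$ and $[W,W,T]$; the latter vanishes by Lemma \ref{lemma1} combined with item \ref{(1)} of Lemma \ref{lemma4}, so $\gamma_3 = a_3 [T,W,T]$ with $a_3$ a cubic polynomial in the $c_j$ produced by the BCH recursion.

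The substantive work is at order $\tau^5$. The free Lie algebra on $\{T,W\}$ has dimension six in degree five; graded by the number of $T$'s and $W$'s, the sectors of content $(4,1)$, $(3,2)$, $(2,3)$, $(1,4)$, $(0,5)$ have dimensions $1,2,2,1,0$ respectively. The constraint $[W,T,W]=0$ together with its consequences in Lemma \ref{lemma4} collapses this space dramatically: the $(4,1)$ sector remains one-dimensional and is spanned by $[T,T,T,T,W]$; in the $(3,2)$ sector, item \ref{(6)} of Lemma \ref{lemma4} reduces the two basis vectors to the single survivor $[W,T,T,T,W]$; and the sectors $(2,3)$ and $(1,4)$ are killed outright by items \ref{(2)}--\ref{(6)} of Lemma \ref{lemma4} after appropriate use of the Jacobi identity. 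Hence $\gamma_5$ is a linear combination of only $[T,T,T,T,W]$ and $[W,T,T,T,W]$, with coefficients $a_4$ and $a_5$ that are polynomials of degree five in the $c_j$ arising from the corresponding BCH terms. The main obstacle is the bookkeeping in the $(3,2)$ sector; I would handle it by fixing a Hall basis for the free Lie algebra in degree five, rewriting each basis vector in a canonical right-nested form, and verifying the reductions element by element using Lemmas \ref{lemma3} and \ref{lemma4}.
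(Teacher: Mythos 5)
Your proposal is correct and follows essentially the same route as the paper: symmetry of the palindromic composition together with Theorem~\ref{th1} removes all even-order terms, repeated BCH expansion places $\gamma_1,\gamma_3,\gamma_5$ in the degree-$1$, $3$, $5$ parts of the Lie algebra generated by $T$ and $W$, and the consequences of $[W,[T,W]]=0$ collected in Lemmas~\ref{lemma3} and~\ref{lemma4} eliminate every bracket except $[T,W,T]$, $[T,T,T,T,W]$ and $[W,T,T,T,W]$. Your explicit dimension count of the degree-five sectors and the Hall-basis bookkeeping simply make rigorous the reduction that the paper performs implicitly by citing Lemma~\ref{lemma4}, so no gap remains.
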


    \begin{proof}
        According to Theorem~\ref{th1}, any symmetric scheme of the form (\ref{2.15}) contains no even-order terms in its expansion. Thus $S^{(n)}(\tau)$ has the following form:
        \begin{align}    \label{gamma}
            S^{(n)}(\tau) 
            &= \exp\Bigl(\,\sum_{k=1}^{6} \tau^k \gamma_k + \mathcal{O}(\tau^{7})\Bigr), \qquad 
            \gamma_{2k}=0, \; \forall k\in\mathbb{N^*}, \notag\\
            &= \exp\Bigl(\tau \gamma_1 + \tau^3\gamma_3 + \tau^5\gamma_5 + \mathcal{O}(\tau^7)\Bigr).
        \end{align} 
        Expanding $S^{(n)}(\tau)$ repeatedly via the BCH formula shows that 
        $\gamma_1$ is a combination of $W$ and $T$,
        $\gamma_3$ is a combination of all double commutators in $W$ and $T$, and 
        $\gamma_5$ is a combination of all five-entry commutators in $W$ and $T$.
        By applying Lemma~\ref{lemma4}, we can remove the vanishing commutators, then
        \begin{align*}
            \gamma_1 &= a_1T + a_2W, \\ 
            \gamma_3 &= a_3[T,W,T], \\ 
            \gamma_5 &= a_4[T,T,T,T,W]+a_5[W,T,T,T,W],
        \end{align*}
        where the coefficients $a_1,\cdots,a_5$ are polynomials in terms of $c_0,c_1,c_2,\cdots$. Substituting these expressions into (\ref{gamma}) gives the desired results.
    \end{proof}
    
    In order to derive a sixth-order scheme, we should choose appropriate parameters in (\ref{2.15}), such that
    \begin{align}  \label{condition}
        \gamma_1 = W+T,\quad \gamma_3 = \gamma_5 =0,\quad i.e. ,\quad a_1 = a_2 = 1,\quad a_3 = a_4 = a_5 = 0.
    \end{align}
    It's worth mentioning that since the equation (\ref{condition}) has five conditions, we require to have five parameters $c_0,\cdots,c_4$ in the expression~\eqref{2.15}. Specifically, we suppose
    \begin{align} \label{2.18}
         S_{6c}(\tau) = e^{\tau c_4 W} e^{\tau c_3 T} e^{\tau c_2 W}e^{\tau c_1 T} e^{\tau c_0 W}e^{\tau c_1 T} e^{\tau c_2 W}e^{\tau c_3 T} e^{\tau c_4 W}.
    \end{align}
    To obtain (\ref{condition}), we rigorously expand (\ref{2.18}) and match the corresponding coefficients. This yields a system of algebraic equations for $c_0,\dots,c_4$, which we solve numerically.
    The following theorem is useful when expanding (\ref{2.18}):
    \begin{theorem} \label{th3}
        By repeated applications of the BCH formula~\eqref{BCH}, we have the following expansion:
    \begin{equation*}  
            \exp(X)\exp(Y)\exp(X) = \exp(U),
    \end{equation*}
    with
    \begin{align}   \label{BCH2}
        U =& 2X + Y + \frac{1}{6}[Y,[Y,X]]-\frac{1}{6}[X,[X,Y]] \notag\\
        &+ \frac{7}{360}[X,[X,[X,[X,Y]]]]- \frac{1}{360}[Y,[Y,[Y,[Y,X]]]] 
        + \frac{1}{90}[X,[Y,[Y,[Y,X]]]] \notag\\
        &+ \frac{1}{45}[Y,[X,[X,[X,Y]]]] - \frac{1}{60}[X,[X,[Y,[Y,X]]]] + \frac{1}{30}[Y,[Y,[X,[X,Y]]]] + \cdots
    \end{align}
    \end{theorem}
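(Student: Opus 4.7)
The plan is to apply the BCH formula (\ref{BCH}) twice in succession and collect terms up through total degree five in $X$ and $Y$. First I would set $\exp(X)\exp(Y) = \exp(Z)$ with
\[
Z = X + Y + \tfrac{1}{2}[X,Y] + \tfrac{1}{12}\bigl([X,[X,Y]]+[Y,[Y,X]]\bigr) + \cdots,
\]
read off directly from (\ref{BCH}), keeping all terms up to degree five. Then I would apply BCH a second time to $\exp(Z)\exp(X) = \exp(U)$ and expand, producing $U$ as a series of nested commutators in $Z$ and $X$, which after substituting the expression for $Z$ becomes a series of nested commutators in $X$ and $Y$.

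Before grinding through the fifth-order terms, I would exploit a symmetry observation that dramatically reduces the bookkeeping. Substituting $X\mapsto \tau X$, $Y\mapsto \tau Y$ and writing $S(\tau):=\exp(\tau X)\exp(\tau Y)\exp(\tau X)$, one checks directly that $S(\tau)S(-\tau) = I$, so $S$ is symmetric in the sense of Theorem~\ref{th1}. Hence when $S(\tau) = \exp(U(\tau))$, only odd powers of $\tau$ appear, which means that $U$ as a formal Lie series in $X$ and $Y$ is odd under $(X,Y)\mapsto(-X,-Y)$ and therefore contains only nested-commutator monomials of odd total degree. All even-degree contributions arising from the two BCH applications must cancel; this is both a computational shortcut and a strong internal consistency check.

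Next I would compute the coefficients degree by degree. The degree-one part is immediately $2X+Y$. The degree-three part comes from the $\tfrac{1}{2}[Z,X]$ and $\tfrac{1}{12}\bigl([Z,[Z,X]]+[X,[X,Z]]\bigr)$ contributions of the second BCH step combined with the cubic commutators already present in $Z$; after substituting $Z = X+Y+\tfrac12[X,Y]+\cdots$ and reducing with skew-commutativity, bilinearity and the Jacobi identity from Lemma~\ref{lemma3}, the result collapses to $\tfrac{1}{6}[Y,[Y,X]] - \tfrac{1}{6}[X,[X,Y]]$. The degree-five part is analogous but involves the quintic terms in (\ref{BCH}) together with products of lower-order terms produced by the nonlinearity of the exponential; after Jacobi reductions one obtains the five displayed nested commutators with the stated coefficients.

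The main obstacle will be the fifth-order bookkeeping: many a priori distinct length-five nested brackets appear, and the Jacobi identity induces linear relations among them, so the final expression in a particular basis is not unique. I would control this by fixing a canonical basis of length-five commutators, for instance the left-normed brackets $[\cdot,[\cdot,[\cdot,[\cdot,\cdot]]]]$, and systematically rewriting every contribution in that basis before summing. The oddness of $U$ in $(X,Y)$ eliminates roughly half of the candidate contributions, and a further sanity check is provided by specialising to the abelian case $[X,Y]=0$, where every bracket term must vanish identically, which verifies that the coefficient combinations in each basis element are consistent.
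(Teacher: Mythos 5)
Your proposal is correct and follows essentially the same route as the paper, which offers no detailed derivation beyond asserting that the expansion follows ``by repeated applications of the BCH formula'' (\ref{BCH}): you apply BCH to $\exp(X)\exp(Y)=\exp(Z)$ and then to $\exp(Z)\exp(X)=\exp(U)$, and reduce with the commutator identities of Lemma~\ref{lemma3}. Your additional observation that $S(\tau)=\exp(\tau X)\exp(\tau Y)\exp(\tau X)$ satisfies $S(\tau)S(-\tau)=I$, so that by Theorem~\ref{th1} only odd-degree brackets survive, is a sound and useful shortcut (though note the abelian specialization at the end checks nothing, since every bracket vanishes identically there).
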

    
    Taking into account Lemmas \ref{lemma1}-\ref{lemma4} and Theorems \ref{th2}-\ref{th3}, we can expand the central part of (\ref{2.18}) as
    \begin{equation*}
        e^{\tau c_1 T}e^{\tau c_0 W}e^{\tau c_1 T} := e^{V_1}
    \end{equation*}
    where
    \begin{align}
        V_1 =& \tau(2c_1T + c_0W) - \tau^3\left(\frac{1}{6}[c_1T,c_1T,c_0W]\right) \notag\\
        &+\tau^5 \left(\frac{7}{360}[c_1T,c_1T,c_1T,c_1T,c_0W] +\frac{1}{45}[c_0W,c_1T,c_1T,c_1T,c_0W] \right)+\mathcal{O}(\tau^7) 
    \end{align}
    \noindent By applying Theorem \ref{th3} again to (\ref{2.18}) and simplifying (see details in ``Appendix A”), we have
    \begin{equation*}
    e^{\tau c_2 W}e^{\tau c_1 T} e^{\tau c_0 W}e^{\tau c_1 T} e^{\tau c_2 W} = e^{\tau c_2 W}e^{V_1}e^{\tau c_2 W} := e^{V_2}
    \end{equation*}
    where
    \begin{align} \label{V2}
        V_2 =& \tau\left(2c_1 T + (c_0 + 2c_2)W \right) +\tau^3 \left(\frac{1}{6} c_1^2(c_0-4c_2)[T,W,T] \right)\notag\\
        &+\tau^5(\frac{7}{360}c_0c_1^4-\frac{2}{45}c_1^4c_2)[T,T,T,T,W] \notag\\
        &+\tau^5(\frac{1}{45}c_0^2c_1^3-\frac{1}{18}c_0c_1^3c_2-\frac{1}{45}c_0c_1^3c_2+\frac{4}{45}c_1^3c_2^2)[W,T,T,T,W] + \mathcal{O}(\tau^7) 
    \end{align}
    For simplicity, let
    \begin{align}
     &e^{\tau c_3 T} e^{\tau c_2 W}e^{\tau c_1 T} e^{\tau c_0 W}e^{\tau c_1 T} e^{\tau c_2 W}e^{\tau c_3 T} = e^{\tau c_3 T}e^{V_2}e^{\tau c_3 T} =: e^{V_3} \label{eq:V3} \\
     &e^{\tau c_4 W} e^{\tau c_3 T} e^{\tau c_2 W}e^{\tau c_1 T} e^{\tau c_0 W}e^{\tau c_1 T} e^{\tau c_2 W}e^{\tau c_3 T} e^{\tau c_4 W} = e^{\tau c_4 W}e^{V_3}e^{\tau c_4 W} =: e^{V_4} \label{eq:V4}
    \end{align}
    By repeatedly applying Theorem \ref{th3} and simplifying (see details in ``Appendices B and C”), we can get the coefficients in $V_3$ and $V_4$ as shown in Tables \ref{tab2} and \ref{tab3}, respectively.

    \renewcommand{\arraystretch}{1.8}
    \begin{table}[t]
        \centering
        \resizebox{\textwidth}{!}{
        \begin{tabular}{m{0.8cm}<{\centering}cm{10cm}<{\centering}}
        \toprule[1.5pt]
           \multirow{2}{*}{$\tau$}  & $T$ & $2(c_1+c_3)$  \\
           \cline{2-3}
            & $W$ & $c_0+2c_2$  \\
        \midrule[0.8pt]
            $\tau^3$  & $[T,W,T]$  & $\frac{1}{6}c_1^2(c_0-4c_2)+\frac{1}{3}c_1c_3(c_0+2c_2)+\frac{1}{6}c_3^2(c_0+2c_2)$ \\
        \midrule[0.8pt]
        \multirow{2}{*}{$\tau^5$}  & $[T,T,T,T,W]$ & $c_1^4\left(\frac{7}{360}c_0-\frac{2}{45}c_2\right)+\frac{1}{18}c_1^3c_3(c_0-4c_2)+\frac{1}{36}c_1^2c_3^2(c_0-4c_2)+\frac{1}{45}c_1^3c_3(c_0+2c_2)
        +\frac{4}{45}c_1^2c_3^2(c_0+2c_2)+\frac{7}{90}c_1c_3^3(c_0+2c_2)+\frac{7}{360}c_3^4(c_0+2c_2)$ \\
        \cline{2-3}
        & $[W,T,T,T,W]$ & $c_1^3\left(\frac{1}{45}c_0^2-\frac{7}{90}c_0c_2+\frac{4}{45}c_2^2\right)+\frac{1}{18}c_1^2c_3(c_0+2c_2)(c_0-4c_2)+\frac{1}{90}c_1^2c_3(c_0+2c_2)^2+\frac{1}{45}c_3^3(c_0+2c_2^2)+\frac{1}{15}c_1c_3^2(c_0+2c_2)^2$ \\
        \bottomrule[1.5pt]
        \end{tabular}}
        \caption{Coefficients for $V_3$ in~\eqref{eq:V3}.}
        \label{tab2}
    \end{table}

    To achieve (\ref{condition}), we require
    \begin{equation}  \label{equation}
    \left \{ 
    \small{
        \begin{aligned}
            a_1 =& 2(c_1+c_3) = 1 \\
            a_2 =& c_0+2c_2+2c_4 = 1 \\
            a_3 =& \frac{1}{6}c_1^2(c_0-4c_2)+\frac{1}{3}c_1c_3(c_0+2c_2)+\frac{1}{6}c_3^2(c_0+2c_2)-\frac{2}{3}(c_1+c_3)^2c_4 = 0 \\
            a_4 =& c_1^4(\frac{7}{360}c_0-\frac{2}{45}c_2)+\frac{1}{18}c_1^3c_3(c_0-4c_2)+\frac{1}{36}c_1^2c_3^2(c_0-4c_2) \\
            &+\frac{1}{45}c_1^3c_3(c_0+2c_2) 
            +\frac{4}{45}c_1^2c_3^2(c_0+2c_2)+\frac{7}{90}c_1c_3^3(c_0+2c_2) \\
            &+\frac{7}{360}c_3^4(c_0+2c_2)-\frac{2}{45}(c_1+c_3)^4c_4 = 0\\
            a_5 =& c_1^3(\frac{1}{45}c_0^2-\frac{7}{90}c_0c_2+\frac{4}{45}c_2^2)+\frac{1}{18}c_1^2c_3(c_0+2c_2)(c_0-4c_2) \\
            &+\frac{1}{90}c_1^2c_3(c_0+2c_2)^2 
            + \frac{1}{45}c_3^3(c_0+2c_2^2)+\frac{1}{15}c_1c_3^2(c_0+2c_2)^2 \\
            &- \frac{1}{45}(c_0+2c_2)(c_1+c_3)^3c_4+\frac{4}{45}(c_1+c_3)^3c_4^2 -\frac{1}{18}c_1^2(c_0-4c_2)c_4(c_1+c_3) \\
            &-\frac{1}{9}c_1c_3(c_1+c_3)(c_0+2c_2)c_4-\frac{1}{18}c_3^2(c_1+c_3)(c_0+2c_2)c_4 = 0
        \end{aligned}}
    \right.
    \end{equation}
    Employing MATLAB to solve (\ref{equation}), we get one possible solution as given in Table \ref{tab4}.

    \begin{table}[t]
        \centering
        \resizebox{\textwidth}{!}{
        \begin{tabular}{m{0.8cm}<{\centering}cm{10cm}<{\centering}}
        \toprule[1.5pt]
           \multirow{2}{*}{$\tau$}  & $T$ & $2(c_1+c_3)$  \\
           \cline{2-3}
            & $W$ & $c_0+2c_2+2c_4$  \\
        \midrule[0.8pt]
            $\tau^3$  & $[T,W,T]$  & $\frac{1}{6}c_1^2(c_0-4c_2)+\frac{1}{3}c_1c_3(c_0+2c_2)+\frac{1}{6}c_3^2(c_0+2c_2)-\frac{2}{3}(c_1+c_3)^2c_4$ \\
        \midrule[0.8pt]
        \multirow{2}{*}{$\tau^5$}  & $[T,T,T,T,W]$ & $c_1^4(\frac{7}{360}c_0-\frac{2}{45}c_2)+\frac{1}{18}c_1^3c_3(c_0-4c_2)+\frac{1}{36}c_1^2c_3^2(c_0-4c_2)+\frac{1}{45}c_1^3c_3(c_0+2c_2)
        +\frac{4}{45}c_1^2c_3^2(c_0+2c_2)+\frac{7}{90}c_1c_3^3(c_0+2c_2)+\frac{7}{360}c_3^4(c_0+2c_2)-\frac{2}{45}(c_1+c_3)^4c_4$  \\
        \cline{2-3}
        & $[W,T,T,T,W]$ & $c_1^3(\frac{1}{45}c_0^2-\frac{7}{90}c_0c_2+\frac{4}{45}c_2^2)+\frac{1}{18}c_1^2c_3(c_0+2c_2)(c_0-4c_2)+\frac{1}{90}c_1^2c_3(c_0+2c_2)^2+\frac{1}{45}c_3^3(c_0+2c_2^2)+\frac{1}{15}c_1c_3^2(c_0+2c_2)^2-\frac{1}{45}(c_0+2c_2)(c_1+c_3)^3c_4+\frac{4}{45}(c_1+c_3)^3c_4^2-\frac{1}{18}c_1^2(c_0-4c_2)c_4(c_1+c_3)-\frac{1}{9}c_1c_3(c_1+c_3)(c_0+2c_2)c_4-\frac{1}{18}c_3^2(c_1+c_3)(c_0+2c_2)c_4$  \\
        \bottomrule[1.5pt]
        \end{tabular}}
        \caption{Coefficients for $V_4$ in~\eqref{eq:V4}.}
        \label{tab3}
    \end{table}

    \renewcommand{\arraystretch}{1.2}
    \begin{table}[t]
        \setlength{\tabcolsep}{20pt}
        \centering
        \setlength{\tabcolsep}{25pt}
        \begin{tabular}{cc}
        \toprule
               & Solution \\
        \midrule
         $c_0$ & 0.56752783701702083198289862647776935943946193310303 \\
         $c_1$ & -0.57985242638243088245699127515351743868902398276509 \\
         $c_2$ & -0.14371472730265404347711307162339049261607229413988 \\
         $c_3$ & 1.0798524263824308824569912751535174386890239827651 \\
         $c_4$ & 0.35995080879414362748566375838450581289634132758836 \\
        \bottomrule
        \end{tabular}
        \caption{One possible solution of the coefficients for the sixth-order compact splitting method.}
        \label{tab4}
    \end{table}
    
    Thus, we propose the \textbf{sixth-order compact splitting} ($S_{6\text{c}}$) method for the Dirac equation (\ref{Dirac1d2d}):
    \begin{equation}  \label{S_6c}
        S_{6c}(\tau) = e^{\tau c_4 W} e^{\tau c_3 T} e^{\tau c_2 W}e^{\tau c_1 T} e^{\tau c_0 W}e^{\tau c_1 T} e^{\tau c_2 W}e^{\tau c_3 T} e^{\tau c_4 W}
    \end{equation}
    with coefficients $c_0,\cdots,c_4$ from Table \ref{tab4}. A major advantage of $S_\text{6c}$ is the requirement for merely four integrations of the operator $T$ and five integrations of the operator $W$. In addition, there is no need to compute any commutators. Table \ref{tab5} compares the numbers of operators $T$ and $W$ implemented in different sixth-order time-splitting methods. From the table, we can make the following observations: (i) the computational cost of $S_6$ is over twice compared to $S_{6\text{c}}$; (ii)  the computational cost of $S_6^*$ is over five times compared to $S_{6\text{c}}$; (iii) among the three sixth-order splitting methods, $S_{6\text{c}}$ is the most efficient and $S_6^*$ is the most expensive.
    
    \begin{table}[t]
        \centering
        \setlength{\tabcolsep}{40pt}
        \begin{tabular}{cccc}
        \toprule
            & $S_6^*$ & $S_6$ & $S_{6\text{c}}$    \\
        \midrule
        $T$  & 25 & 9 & 4 \\
        \midrule
        $W$  & 26 & 10 & 5 \\
        \bottomrule
        \end{tabular}
        \caption{The numbers of operators $T$ and $W$ to be implemented in different sixth-order time-splitting methods.}
        \label{tab5}
    \end{table}


\section{The full-discretization of $S_{\text{6c}}$}\label{sec3}

    Applying a Fourier pseudospectral discretization in space, this section develops the full discretization of $S_\text{6c}$ from Section \ref{sec2} for the Dirac equation without magnetic potentials. We begin by addressing time-independent potentials (i.e. $V(t,\mathbf{x})\equiv V(\mathbf{x})$), and thereafter extend the approach to time-dependent cases through the time-ordering technique. For simplicity, we use the 1D Dirac equation (\ref{Dirac1d2d}) as an example, and we remark that the method can be easily extended to 2D cases.

    \subsection{$S_\text{6c}$ with time-independent potentials}
    We first consider time independent potentials, i.e. $V(t,\mathbf{x})\equiv V(\mathbf{x})$ in (\ref{Dirac1d2d}). Similar to most works in the literature on the analysis and computation of the Dirac equation~\cite{Bao2016,Bao2017,Bao20166,Bao2004,Bao2019}, we truncate the spatial domain to a bounded interval $\Omega = (a, b)$, and apply periodic boundary conditions for practical computation. The truncated interval is chosen sufficiently large to ensure negligible truncation error. Under these conditions, the 1D Dirac equation without magnetic potentials is as follows:
    \begin{equation}   \label{Dirac}
    \left \{
        \begin{aligned}
            & i\delta \partial_t \Phi = \left(-i\frac{\delta}{\eps} \sigma_1 \partial_x + \frac{\nu}{\eps^2}\sigma_3 \right) \Phi + V(x)I_2  \Phi, \quad x\in \Omega, \quad t \ge 0, \\
            & \Phi(t,a) = \Phi(t,b),\quad \partial_x\Phi(t,a) = \partial_x\Phi(t,b),\quad t \ge 0,\\
            & \Phi(0,x) = \Phi_0(x), \quad a \le x \le b,
        \end{aligned}
    \right.
    \end{equation}
    where $\Phi:=\Phi(t,x),\Phi_0(a)=\Phi_0(b)$ and $\Phi_0'(a)=\Phi_0'(b)$.
    
    With a time step $\tau > 0$, we denote $t_n = n\tau$ ($\forall n \in\mathbb{N}$) and let $\Phi^n(x)$ represent the numerical approximation to $\Phi(t_n, x)$. We reformulate the Dirac equation (\ref{Dirac}) as
    \begin{equation}  \label{WT}
    \partial_t\Phi =\left( -\frac{1}{\eps} \sigma_1 \partial_1 - \frac{i\nu}{\delta \eps^2}\sigma_3\right)\Phi -\frac{i}{\delta} V(x)I_2\Phi = (T+W)\Phi,
    \end{equation}
    where the operators $T$ and $W$ are defined in~\eqref{2.10} with $d=1$. Then \(S_{6\text{c}}\) (\ref{S_6c}) can be applied to perform temporal integration over the interval \([t_n, t_{n+1}]\):
    \begin{align}  \label{3.3}
        \Phi^{n+1}(x)=S_{6c}(\tau)\Phi^n(x)=e^{\tau c_4 W} &e^{\tau c_3 T} e^{\tau c_2 W}e^{\tau c_1 T} e^{\tau c_0 W}e^{\tau c_1 T} e^{\tau c_2 W}e^{\tau c_3 T} e^{\tau c_4 W}\Phi^n(x),\notag \\
        &a\le x\le b,\quad n \ge 0,
    \end{align}
    where the values of the coefficients $c_0, \dots, c_4$ are listed in Table \ref{tab4}. To compute $e^{\tau c_k T}$ ($k = 1,3$), we first discretize the functions spatially via the Fourier pseudospectral method, and then perform temporal integration in the phase space (or the Fourier space) \cite{Bao2017,Bao2004}. On the other hand, the diagonal structure of the operator $W$ enables efficient computation of $e^{\tau c_k W}$ ($k = 0,2,4$).
    
    We define the mesh size $h := \triangle x = \frac{b 
        - a}{M}$, with $M$ being a positive even integer, then the grid points can be given as
    \begin{equation}
    x_j := a + jh, \quad j = 0, 1, \cdots , M.
    \end{equation}
    The exact solution is approximated by $\Phi_j^n \approx \Phi(t_n, x_j)$, 
    and the initial and boundary conditions can be discretized as
    \begin{equation}
    \begin{aligned}
    &\Phi_j^0 = \Phi_0(x_j), \quad j = 0, \cdots, M,\\
    &\Phi_0^n = \Phi_M^n, \quad \Phi_{-1}^n = \Phi_{M - 1}^n, \quad n = 0, 1,\cdots.
    \end{aligned}
    \end{equation}
    Let $X_M = \{U = (U_0,U_1,\cdots,U_M)^T|U_j \in \mathbb{C}^2,j=0,1,\cdots,M, U_0 = U_M\}$, then for all $U \in X_M$, we denote its Fourier representation as
    \begin{equation}
        U_j = \sum_{l=-M/2}^{M/2-1} \widetilde{U}_l e^{i \mu_l(x_j-a)} 
            = \sum_{l=-M/2}^{M/2-1} \widetilde{U}_l e^{2ijl\pi/M}, \quad j = 0,1,\cdots,M,
    \end{equation}
    where $\mu_l$ and $\widetilde{U}_l \in \mathbb{C}^2$ are:
    \begin{equation}
        \mu_l = \frac{2l\pi}{b-a}, \quad \widetilde{U}_l = \frac{1}{M} \sum_{j=0}^{M-1} U_j e^{-2ijl\pi/M}, \quad l = -\frac{M}{2},\cdots,\frac{M}{2}-1.
    \end{equation}
    Then the full-discretization of $S_\text{6c}$ for the Dirac equation (\ref{Dirac}) with $j = 0, 1, \cdots, M$ is given as
	\begin{align} 
        \Phi_j^{(1)} &= e^{\tau c_4 W(x_j)} \Phi_j^n = e^{-ic_4\tau \Lambda_1(x_j)} \Phi_j^n, \notag\\
        \Phi_j^{(2)} &= \sum_{l=-M/2}^{M/2-1} e^{c_3 \tau \Gamma_l} (\widetilde{\Phi^{(1)}})_l e^{i \mu_l(x_j-a)} = \sum_{l=-M/2}^{M/2-1} Q_le^{-ic_3\tau D_l} Q_l^*(\widetilde{\Phi^{(1)}})_l e^{2ijl\pi/M}, \notag\\
        \Phi_j^{(3)} &= e^{\tau c_2 W(x_j)} \Phi_j^{(2)} = e^{-ic_2\tau \Lambda_1(x_j)} \Phi_j^{(2)}, \notag\\
        \Phi_j^{(4)} &= \sum_{l=-M/2}^{M/2-1} e^{c_1 \tau \Gamma_l} (\widetilde{\Phi^{(3)}})_l e^{i \mu_l(x_j-a)} = \sum_{l=-M/2}^{M/2-1} Q_le^{-ic_1\tau D_l} Q_l^*(\widetilde{\Phi^{(3)}})_l e^{2ijl\pi/M}, \notag\\
        \Phi_j^{(5)} &= e^{\tau c_0 W(x_j)} \Phi_j^{(4)} = e^{-ic_0\tau \Lambda_1(x_j)} \Phi_j^{(4)}, \notag\\
        \Phi_j^{(6)} &= \sum_{l=-M/2}^{M/2-1} e^{c_1 \tau \Gamma_l} (\widetilde{\Phi^{(5)}})_l e^{i \mu_l(x_j-a)} = \sum_{l=-M/2}^{M/2-1} Q_le^{-ic_1\tau D_l} Q_l^*(\widetilde{\Phi^{(5)}})_l e^{2ijl\pi/M}, \notag\\
        \Phi_j^{(7)} &= e^{\tau c_2 W(x_j)} \Phi_j^{(6)} = e^{-ic_2\tau \Lambda_1(x_j)} \Phi_j^{(6)},\notag\ \\
        \Phi_j^{(8)} &= \sum_{l=-M/2}^{M/2-1} e^{c_3 \tau \Gamma_l} (\widetilde{\Phi^{(7)}})_l e^{i \mu_l(x_j-a)} = \sum_{l=-M/2}^{M/2-1} Q_le^{-ic_3\tau D_l} Q_l^*(\widetilde{\Phi^{(7)}})_l e^{2ijl\pi/M}, \notag\\
        \Phi_j^{n+1} &= e^{\tau c_4 W(x_j)} \Phi_j^{(8)} = e^{-ic_4\tau \Lambda_1(x_j)} \Phi_j^{(8)},
    \end{align}
    where
    \begin{align*}
        &W(x_j) := -\frac{i}{\delta}V(x_j)I_2 = -i\Lambda_1(x_j),\qquad j=0,1,\cdots,M,  \\
        &\Gamma_l = -\frac{i\mu_l}{\eps}\sigma_1-\frac{i\nu}{\delta\eps^2}\sigma_3 = - iQ_lD_lQ_l^*, \quad l = -\frac{M}{2},\cdots,\frac{M}{2}-1, 
    \end{align*}
    with 
    \begin{align*}
         &D_l = \text{diag}(\frac{1}{\delta \eps^2}\sqrt{\nu^2 + \delta^2 \eps^2 \mu_l^2},-\frac{1}{\delta \eps^2}\sqrt{\nu^2 + \delta^2 \eps^2 \mu_l^2}), \\
         &\Lambda_1 (x_j) = \text{diag}(\frac{1}{\delta}V(x_j),\frac{1}{\delta}V(x_j))
    \end{align*}
    and
    \begin{equation*}
        Q_l = \frac{1}{\sqrt{2\eta_l(\eta_l+\nu)}}
        \begin{pmatrix}
            \eta_l+\nu & -\delta\eps\mu_l \\
            \delta\eps\mu_l & \eta_l+\nu \\
        \end{pmatrix},
        \qquad l = -\frac{M}{2},\cdots,\frac{M}{2}-1,
    \end{equation*}
    with $\eta_l=\sqrt{\nu^2 + \delta^2 \eps^2 \mu_l^2}$.
    
    \subsection{$S_\text{6c}$ with time-dependent potentials} \label{sec:tdS6c}
    For time-dependent potentials, we can apply the time-ordering technique \cite{Chin2002,Suzuki} to extend the $S_\text{6c}$ method. Specifically, we consider the following model equation in 1D and 2D ($d=1, 2$):
    \begin{equation} \label{dependent}
    \partial_t u(t,\mathbf{x}) = \big(T + W(t)\big) u(t, \mathbf{x}), \quad t>t_0, \quad \mathbf{x}\in \mathbb{R}^d,
    \end{equation}
    with the initial condition
    \begin{equation}
    u(t_0,\mathbf{x}) = u_0(\mathbf{x}), \quad x\in \mathbb{R}^d,
    \end{equation}
    where $t_0$ denotes the initial time, $T$ is a time-independent operator, and $W(t)$ is time-dependent.

    Define a forward time derivative operator \cite{Chin2002}
    $\mathcal{D}:=\frac{\overset{\leftarrow}{\partial}}{\partial t}$. Its action on a time-dependent function $f(t)$ to its left is given by
    \begin{equation}
    f(t)\mathcal{D}=\lim_{\tau \rightarrow 0} \frac{f(t+\tau)-f(t)}{\tau}.
    \end{equation}
    It is straightforward to verify that
    \begin{equation} \label{3.28}
    F(t)e^{\tau \mathcal{D}}G(t) = F(t + \tau)G(t), \quad t>0,
    \end{equation}
    where $F(\cdot)$ and $G(\cdot)$ are two arbitrary time-dependent functions. Define $\widetilde{T}=T+\mathcal{D}$, the time-ordering technique allows us to express $u(t+\tau)$ as \cite{Yin2021}
    \begin{align} \label{3.29}
    u(t+\tau) = \exp [\tau(\widetilde{T}+W(t))]u(t), \quad t>0,
    \end{align}
    which represents the exact exponential solution for (\ref{dependent}). 

    Based on the time-ordering technique, we can now apply the splitting methods to the Dirac equation (\ref{Dirac1d2d}) with time-dependent potentials. Let $T$ and $W(t)$ be defined as in (\ref{2.12}), then we can get the following lemma from direct calculation.

    \begin{lemma} \label{lemma2}
        For the Dirac equation (\ref{Dirac1d2d}) in 1D and 2D with time-dependent potentials, by taking 
        \begin{equation}   \label{2.12}
            T = -\frac{1}{\eps}\sum_{j=1}^d \alpha_j \partial_j - \frac{i\nu}{\delta\eps^2}\sigma_3,\quad \widetilde{T} = T+\mathcal{D}, \quad  W(t) = -\frac{i}{\delta}V(t,\mathbf{x})I_2,\quad d \in \{1,2\},
        \end{equation}
        we have  
        \begin{equation}
            [W(t), [\widetilde{T}, W(t) ]] = 0.
        \end{equation}
    \end{lemma}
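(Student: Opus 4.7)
The plan is to reduce the time-dependent statement to the time-independent one (Lemma \ref{lemma1}) plus an easy scalar commutator, by exploiting the bilinearity of $[\cdot,\cdot]$ (property 2 of Lemma \ref{lemma3}) together with the defining property of the time-shift operator $\mathcal{D}$. Writing $\widetilde{T} = T + \mathcal{D}$, I would first expand
\begin{equation*}
[W(t),[\widetilde{T},W(t)]] \;=\; [W(t),[T,W(t)]] \;+\; [W(t),[\mathcal{D},W(t)]],
\end{equation*}
so that it suffices to show each of these two terms vanishes.

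For the first term, I would observe that $T$ here has exactly the same form as in Lemma \ref{lemma1} (it is time-independent), and $W(t) = -\tfrac{i}{\delta}V(t,\mathbf{x})I_2$ differs from the time-independent $W$ only in that the scalar function $V$ now depends on $t$. Since every manipulation in the proof of Lemma \ref{lemma1} treats $t$ as a frozen parameter, pointwise in $t$ the identity $[W(t),[T,W(t)]]=0$ is immediate. Explicitly, the $\sigma_3$ piece of $T$ commutes with the multiple of $I_2$ in $W(t)$, so only the gradient piece contributes, giving $[T,W(t)] = \tfrac{i}{\delta\eps}\sum_j \sigma_j(\partial_j V)(t,\mathbf{x})$; commuting this once more with $W(t)$, which is a scalar multiple of $I_2$, yields zero because $I_2$ commutes with every $\sigma_j$ and with scalar functions.

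For the second term, the key identity I would use is $[\mathcal{D},W(t)] = \partial_t W(t)$, which follows from (\ref{3.28}) by differentiating $e^{\tau \mathcal{D}}W(t)e^{-\tau\mathcal{D}} = W(t+\tau)$ at $\tau=0$ (i.e., conjugation by $e^{\tau\mathcal{D}}$ translates the time argument). Therefore
\begin{equation*}
[W(t),[\mathcal{D},W(t)]] \;=\; [W(t),\partial_t W(t)] \;=\; \left[-\tfrac{i}{\delta}V(t,\mathbf{x})I_2,\; -\tfrac{i}{\delta}(\partial_t V)(t,\mathbf{x})I_2\right],
\end{equation*}
which vanishes because both operators are scalar multiplication operators times the identity matrix and thus commute trivially. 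Combining the two vanishing pieces completes the proof.

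The main subtlety, and the only place where care is really needed, is the correct interpretation of the formal operator $\mathcal{D}$: one must justify $[\mathcal{D},W(t)] = \partial_t W(t)$ cleanly from (\ref{3.28}) rather than mistakenly reading $\mathcal{D}$ as an ordinary $\partial_t$ acting on wavefunctions. Once that identity is in hand, the proof is essentially a one-line consequence of Lemma \ref{lemma1} plus the observation that $W(t)$ and $\partial_t W(t)$ are both proportional to $I_2$.
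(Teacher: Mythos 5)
Your proof is correct and takes essentially the same route the paper has in mind: the paper merely asserts the lemma ``from direct calculation'' (citing the time-dependent commutator computations in \cite{Yin2021}), and your splitting $[W(t),[\widetilde{T},W(t)]]=[W(t),[T,W(t)]]+[W(t),[\mathcal{D},W(t)]]$, with the first term vanishing as in Lemma \ref{lemma1} (with $t$ frozen) and the second vanishing because $[\mathcal{D},W(t)]$ is again a scalar multiplication operator proportional to $I_2$, is exactly that calculation. One small caveat: with the paper's left-acting convention (\ref{3.28}) the conjugation identity actually yields $[\mathcal{D},W(t)]=-\partial_t W(t)$ rather than $+\partial_t W(t)$, but the sign is immaterial since $[W(t),\pm\partial_t W(t)]=0$ either way.
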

    
    This lemma implies that the previously constructed $S_{6\text{c}}$ method remains valid for the Dirac equation (\ref{Dirac1d2d}) with time-dependent potentials. Specifically, for a time step $\tau > 0$, applying $S_{6\text{c}}$ to the exact solution (\ref{3.29}) with $u(t,\mathbf{x}):= \Phi(t,\mathbf{x})$ yields
    \begin{align}   \label{3.31}
        \Phi(t+\tau) \approx & S_{6c}\Phi(t) \notag\\
        :=& e^{\tau c_4 W(t)}e^{\tau c_3 \widetilde{T}} e^{\tau c_2 W(t)}e^{\tau c_1 \widetilde{T}} e^{\tau c_0 W(t)}e^{\tau c_1 \widetilde{T}} e^{\tau c_2 W(t)}e^{\tau c_3 \widetilde{T}} e^{\tau c_4 W(t)}\Phi(t),\notag \\
        =& e^{\tau c_4 W(t + 2(c_1 + c_3)\tau)} e^{\tau c_3 T} e^{\tau c_2 W(t + (2c_1 + c_3) \tau)}e^{\tau c_1 T}...\notag\\
        &~~ e^{\tau c_0 W(t + (c_1 + c_3)\tau)}e^{\tau c_1 T} e^{\tau c_2 W(t + c_3 \tau)}e^{\tau c_3 T} e^{\tau c_4 W(t)}\Phi(t),\notag \\
        =& e^{\tau c_4 W(t + \tau)} e^{\tau c_3 T} e^{\tau c_2 W(t + (1 - c_3) \tau)}e^{\tau c_1 T}... \notag\\
        &~~ e^{\tau c_0 W(t + \frac{1}{2}\tau)}e^{\tau c_1 T} e^{\tau c_2 W(t + c_3 \tau)}e^{\tau c_3 T} e^{\tau c_4 W(t)}\Phi(t),
    \end{align}
    where the coefficients $c_0,\dots,c_4$ are given in Table \ref{tab4}, the second equality employs the result from (\ref{3.28}), and the third equality relies on the identity $2(c_1+c_3)=1$. 
    
    Denote $t_n = n\tau$ ($\forall n \in\mathbb{N}$), and denote $\Phi^n(x)$ as the numerical approximation to $\Phi(t_n,x)$, applying $S_{6\text{c}}$ over the time interval $[t_n, t_{n+1}]$ gives:
    \begin{align}   \label{3.17}
    \Phi^{n+1}(x) =& e^{\tau c_4 W(t_{n+1})} e^{\tau c_3 T} e^{\tau c_2 W(t_{n+1}-c_3\tau)} e^{\tau c_1 T}... \notag \\
    &~~e^{\tau c_0 W(t_n+\frac{1}{2}\tau)} e^{\tau c_1 T} e^{\tau c_2 W(t_n + c_3 \tau)} e^{\tau c_3 T} e^{\tau c_4 W(t_n)} \Phi^n(x),
    \end{align}
    with the initial condition
    \begin{equation}
    \Phi^0(x) := \Phi_0(x), \quad x \in \mathbb{R}.
    \end{equation}
    The computation of $e^{\tau c_k T}$ ($k = 1,3$) and $e^{\tau c_k W}$ ($k = 0, 2, 4$) with full-discretization follows the same procedure as with the time-independent potential: for $e^{\tau c_k T}$ ($k = 1,3$), we apply spatial discretization via the Fourier pseudospectral method and then integrate in time in the phase space (or the Fourier space) \cite{Bao2017,Bao2004}; for $e^{\tau c_k W}$ ($k = 0, 2, 4$), the diagonal structure of the operator $W(t)$ enables efficient computation.

    \subsection{Mass conservation}
    
	The $S_\text{6c}$ method with time-independent  potentials conserves the discretized mass, as shown in the following lemma.
    \begin{lemma}
        For any $\tau>0$, the $S_{\text{6c}}$ method (\ref{3.3}) with time-independent potentials in 1D conserves the mass in the discretized level, i.e., 
        \begin{equation} \label{3.19}
        \Vert \Phi^{n+1} \Vert^2_{l^2} := h \sum_{j=0}^{M-1}|\Phi_j^{n+1}|^2 \equiv h \sum_{j=0}^{M-1}|\Phi_j^{0}|^2 = h \sum_{j=0}^{M-1}|\Phi_0(x_j)|^2 = \Vert \Phi_{0} \Vert^2_{l^2} , \quad n \ge 0.
    \end{equation}
    \begin{proof}
       Noticing $W(x_j)^* = - W(x_j)$, and thus $\left(e^{\tau c_4 W(x_j)}\right)^*e^{\tau c_4 W(x_j)} = I_2$, from (\ref{3.3}) and summing for $j = 0,1,\dots,M-1$ , we have
        \begin{align}
             \Vert \Phi^{n+1} \Vert^2_{l^2} &= h \sum_{j=0}^{M-1}|\Phi_j^{n+1}|^2 = h \sum_{j=0}^{M-1} |e^{\tau c_4 W(x_j)}\Phi_j^{(8)}|^2 \notag\\
             &= h \sum_{j=0}^{M-1} \left( \Phi_j^{(8)} \right)^*\left(e^{\tau c_4 W(x_j)}\right)^*e^{\tau c_4 W(x_j)}\Phi_j^{(8)} \notag\\
             &= h \sum_{j=0}^{M-1} \left( \Phi_j^{(8)} \right)^* I_2 \Phi_j^{(8)} = h \sum_{j=0}^{M-1}|\Phi_j^{(8)}|^2 = \Vert \Phi^{(8)} \Vert^2_{l^2}, \quad n \ge 0.
        \end{align}
        Similarly, we have
        \begin{align} \label{3.21}
             &\Vert \Phi^{(7)} \Vert^2_{l^2} = \Vert \Phi^{(6)} \Vert^2_{l^2}, \quad \Vert \Phi^{(5)} \Vert^2_{l^2} = \Vert \Phi^{(4)} \Vert^2_{l^2}, \notag\\
             &\Vert \Phi^{(3)} \Vert^2_{l^2} = \Vert \Phi^{(2)} \Vert^2_{l^2},\quad \Vert \Phi^{(1)} \Vert^2_{l^2} = \Vert \Phi^{n} \Vert^2_{l^2}, \quad n \ge 0.
        \end{align}
        Moreover, by using the Parseval’s identity and noticing $\Gamma_l^* = -\Gamma_l$, thus $\left(e^{\tau \Gamma_l}\right)^*e^{\tau \Gamma_l} = I_2$, we have
        \begin{align} \label{3.22}
            &\Vert \Phi^{(8)} \Vert^2_{l^2} = \Vert \Phi^{(7)} \Vert^2_{l^2}, \quad \Vert \Phi^{(6)} \Vert^2_{l^2} = \Vert \Phi^{(5)} \Vert^2_{l^2},\notag\\
            &\Vert \Phi^{(4)} \Vert^2_{l^2} = \Vert \Phi^{(3)} \Vert^2_{l^2},\quad \Vert \Phi^{(2)} \Vert^2_{l^2} = \Vert \Phi^{(1)} \Vert^2_{l^2}.
        \end{align}
        Combining (\ref{3.21}) and (\ref{3.22}), we have
        \begin{align}
            \Vert \Phi^{n+1} \Vert^2_{l^2} &= \Vert \Phi^{(8)} \Vert^2_{l^2} = \Vert \Phi^{(7)} \Vert^2_{l^2} = \Vert \Phi^{(6)} \Vert^2_{l^2} = \Vert \Phi^{(5)} \Vert^2_{l^2} \notag\\
            &=\Vert \Phi^{(4)} \Vert^2_{l^2} = \Vert \Phi^{(3)} \Vert^2_{l^2} = \Vert \Phi^{(2)} \Vert^2_{l^2} = \Vert \Phi^{(1)} \Vert^2_{l^2} = \Vert \Phi^{n} \Vert^2_{l^2}.
        \end{align}
        Using the mathematical induction, we get the mass conservation (\ref{3.19}).
    \end{proof}
    \end{lemma}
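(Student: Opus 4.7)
The plan is to show that each of the nine sub-operators appearing in the $S_{6\text{c}}$ update is an isometry in the discrete $l^2$ norm, and then chain the resulting equalities together through an induction on $n$. This reduces the problem to verifying two unitarity facts: one for the potential factors $e^{\tau c_k W(x_j)}$ (done pointwise in physical space) and one for the kinetic factors $e^{\tau c_k T}$ (done mode-by-mode in Fourier space, with Parseval's identity converting between the two norms).

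First I would treat the potential substeps. Since $V(x_j)\in\mathbb{R}$, the matrix $W(x_j)=-i\Lambda_1(x_j)$ with $\Lambda_1(x_j)=\mathrm{diag}(V(x_j)/\delta,\,V(x_j)/\delta)$ is skew-Hermitian, hence $(e^{\tau c_k W(x_j)})^* e^{\tau c_k W(x_j)} = I_2$ for $k\in\{0,2,4\}$. Applying this identity to the updates $\Phi_j^{(\ell+1)} = e^{\tau c_k W(x_j)}\Phi_j^{(\ell)}$ at every grid point and summing against the weight $h$ immediately gives $\|\Phi^{(\ell+1)}\|_{l^2} = \|\Phi^{(\ell)}\|_{l^2}$ for each of the five potential sub-steps.

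Next I would handle the kinetic substeps. The matrix $\Gamma_l = -i\mu_l\sigma_1/\eps - i\nu\sigma_3/(\delta\eps^2)$ is likewise skew-Hermitian (both Pauli matrices $\sigma_1,\sigma_3$ are Hermitian, and $\mu_l,\nu,\delta,\eps$ are real), so $e^{c_k\tau\Gamma_l}$ is unitary on $\mathbb{C}^2$ for every mode $l$. Equivalently, using the spectral decomposition $\Gamma_l = -iQ_l D_l Q_l^*$, the operator $e^{c_k\tau\Gamma_l} = Q_l e^{-ic_k\tau D_l}Q_l^*$ is a product of a unitary diagonal phase with unitary $Q_l$. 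Applying the discrete Parseval identity $h\sum_{j=0}^{M-1}|U_j|^2 = (b-a)\sum_{l}|\widetilde{U}_l|^2$, the update in Fourier space leaves $\sum_l|\widetilde{U}_l|^2$ invariant, and hence the physical-space $l^2$ norm is preserved by each of the four kinetic substeps.

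Chaining the nine equalities gives $\|\Phi^{n+1}\|_{l^2} = \|\Phi^{(8)}\|_{l^2} = \cdots = \|\Phi^{(1)}\|_{l^2} = \|\Phi^{n}\|_{l^2}$, and induction on $n$ with base case $\|\Phi^0\|_{l^2}^2 = h\sum_j |\Phi_0(x_j)|^2$ yields (\ref{3.19}). The whole argument is essentially bookkeeping; the only minor technical point is to state Parseval's identity with the precise DFT normalization used in the paper so that the constants cancel cleanly when passing between the physical and Fourier representations. There is no genuine obstacle, because the construction of $S_{6\text{c}}$ as a composition of exponentials of skew-Hermitian operators guarantees unitarity step by step.
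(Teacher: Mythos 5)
Your proposal is correct and follows essentially the same argument as the paper: pointwise unitarity of $e^{\tau c_k W(x_j)}$ from $W(x_j)^*=-W(x_j)$ for the potential substeps, unitarity of $e^{c_k\tau\Gamma_l}$ from $\Gamma_l^*=-\Gamma_l$ combined with Parseval's identity for the kinetic substeps, then chaining the nine norm equalities and inducting on $n$. No gaps.
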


\section{Numerical results} \label{sec4}
\setcounter{equation}{0}
\setcounter{table}{0}
\setcounter{figure}{0}

    In this section, we compare the sixth-order compact time-splitting ($S_{6\text{c}}$)  method with other time-splitting methods including  the fourth-order Forest-Ruth time-splitting ($S_4$) method, the fourth-order partitioned Runge-Kutta time-splitting ($S_\text{4RK}$) method, the fourth-order compact time-splitting ($S_{4\text{c}}$) method and the sixth-order Yoshida symplectic time-splitting ($S_6$) method in terms of accuracy and efficiency (here we ignore $S_6^*$ due to too many exponential operator evaluations). In these numerical simulations, as a common practice, we would truncate the whole space onto a sufficiently large bounded domain, and assume periodic boundary conditions.
    
    \subsection{Examples in the classical regime}
    We first consider examples in 2D for the Dirac equation in the classical regime. Specifically,  we take $ \eps = \nu = \delta = 1, d = 2$ in (\ref{Dirac1d2d}) and the initial condition in (\ref{initial}) is taken as
    \begin{equation}
        \phi_1(0,\mathbf{x})=e^{-\frac{x^2+y^2}{2}},\quad \phi_2(0,\mathbf{x}) = e^{-\frac{(x-1)^2+y^2}{2}}, \qquad \mathbf{x} = (x,y)^T \in \mathbb{R}^2.
    \end{equation}
    We choose the honeycomb electric potential
    \begin{equation} \label{V}
        V(t,\mathbf{x}) = \cos\left(\frac{4\pi}{\sqrt{3}}\mathbf{e_1}(t)\cdot \mathbf{x}\right)+\cos\left(\frac{4\pi}{\sqrt{3}}\mathbf{e_2}(t)\cdot \mathbf{x}\right) + \cos\left(\frac{4\pi}{\sqrt{3}}\mathbf{e_3}(t)\cdot \mathbf{x}\right),
    \end{equation}
    with
    \begin{align}
        \mathbf{e_1}(t) &= \left(\cos(\theta(t)),\sin(\theta(t))\right)^T,\notag \\
        \mathbf{e_2}(t) &= \left(\cos(\theta(t)+\frac{2\pi}{3}),\sin(\theta(t)+\frac{2\pi}{3})\right)^T, \notag\\
        \mathbf{e_3}(t) &= \left(\cos(\theta(t)+\frac{4\pi}{3}),\sin(\theta(t)+\frac{4\pi}{3})\right)^T,
    \end{align}
    where $\theta(t)$ is a given function. In our examples, we consider $\theta(t)$ to be\\
    (1) $\theta(t) \equiv \pi$; \\
    (2) $\theta(t) = \pi+\pi t$;\\
    (3) $\theta(t) = \pi + \pi \cos(\pi t)$.\\
    
    In particular, the problem is solved numerically on a bounded domain $(-25, 25) \times (-25, 25)$. 
    Due to the fact that the exact solution is not available, we obtain a numerical `exact’ solution by using the $S_{6c}$ with a fine mesh size $h_e = 1/32 $ and a small time step $\tau_e = 10^{-4}$. Let $\Phi^n=(\Phi^n_0,\Phi^n_1,\cdots,\Phi_{M-1}^n,\Phi^n_M)^T$ be the numerical solution obtained by a numerical method with mesh size $h$ and time step $\tau$, the numerical errors for the wave function, probability density and current density are respectively quantified as 
    \begin{align*}
        e_\Phi(t_n) &= \Vert \Phi^n - \Phi(t_n,\cdot) \Vert_{l_2} := h\sqrt{ \sum_{j =0}^{M-1} \sum_{l =0}^{M-1} |\Phi(t_n,x_j,y_l)-\Phi_{jl}^n|^2},\\
        e_\rho(t_n) &= \Vert |\Phi^n|^2 - |\Phi(t_n,\cdot)|^2 \Vert_{l_2} := h\sqrt{ \sum_{j =0}^{M-1} \sum_{l =0}^{M-1} \big(|\Phi(t_n,x_j,y_l)|^2-|\Phi_{jl}^n|^2\big)^2},\\
        e_\mathbf{J}(t_n) &= \Vert \mathbf{J}(\Phi^n) - \mathbf{J}(\Phi(t_n,\cdot)) \Vert_{l_2} \\
        &:= h\sqrt{ \sum_{j =0}^{M-1} \sum_{l =0}^{M-1} \sum_{k=1}^{2}|(\Phi(t_n,x_j,y_l))^*\sigma_k \Phi(t_n,x_j,y_l) -(\Phi_{jl}^n)^*\sigma_k \Phi_{jl}^n |^2},
    \end{align*}
    where $M = 50/h,x_j := -25 + jh,y_l:=-25+lh$, and $\Phi_{jl}^n(j,l=0,\cdots,M,~n=0,1,\cdots,T/\tau)$ is the numerical solution at ($x_j,y_l$). The results are shown case by case.\\

    \renewcommand{\arraystretch}{1.2}
    \begin{table}[t]
        \centering
        \setlength{\tabcolsep}{15pt}
        \begin{tabular}{cccccc}
        \toprule
              &$h_0 = 1 $ & $h_0/2$ & $h_0/2^2$ & $h_0/2^3$ & $h_0/2^4$   \\
        \midrule
        $S_4$ &  2.62 & 1.10 &  1.01E-01 & 3.83E-04 & 7.33E-10\\
        $S_{\text{4RK}}$  & 2.62 & 1.10 & 1.01E-01& 3.83E-04 & 7.33E-10\\
        $S_{4\text{c}}$  & 2.62 & 1.10 & 1.01E-01 & 3.83E-04 & 7.34E-10\\
        $S_6$  & 2.62 & 1.10 & 1.01E-01 & 3.83E-04 & 7.34E-10\\
        $S_{6\text{c}}$ & 2.62 & 1.10 & 1.01E-01 & 3.83E-04 & 7.33E-10\\
        \bottomrule
        \end{tabular}
        \caption{Spatial errors $e_\Phi(t = 2)$ of different time-splitting methods under different mesh sizes $h$ for the Dirac equation (\ref{Dirac1d2d}) in 2D, with the potential given in (\ref{V}), where $\theta(t) \equiv \pi$.}
        \label{tab11}
    \end{table}
    
    \begin{table}[t]
        \centering
        \resizebox{\textwidth}{!}{%
        \begin{tabular}{ccccccccc}
        \hline
             & & \textbf{$\tau_0 = 1/2$} & \textbf{$\tau_0/2$} & \textbf{$\tau_0/2^2$} & \textbf{$\tau_0 /2^3$} & \textbf{$\tau_0/2^4$} & \textbf{$\tau_0/2^5$} & \textbf{$\tau_0/2^6$}  \\
        \hline
         $S_4$& $e_\Phi(t=2)$ &	1.09E+00&	6.45E-02&	8.85E-03&	7.09E-04&	4.72E-05&	3.00E-06&	1.88E-07 \\
    	& rate & - &	4.07& 	2.87& 	3.64& 	3.91& 	3.98& 	3.99 \\
    	&CPU time (s)&	0.54& 	0.88& 	1.61& 	3.09& 	6.13& 	12.27& 	$\mathbf{24.18}$ \\
        $S_{4\text{RK}}$ & $e_\Phi(t=2)$ &	2.09E-02&	8.93E-04&	1.86E-05&	1.12E-06&	6.88E-08&	4.29E-09&	2.67E-10 \\
    	& rate & - & 4.55&  5.58& 	4.06& 	4.02& 	4.00& 	4.00 \\ 
    	&CPU time (s)&	0.92& 	1.49& 	2.75& 	5.40& 	10.82& 	20.99& 	$\mathbf{42.10}$ \\
        $S_{4\text{c}}$ & $e_\Phi(t=2)$ &	1.69E-01&	7.98E-03&	1.98E-04&	1.18E-05&	7.28E-07&	4.54E-08&	2.84E-09 \\
    	& rate & - & 4.41& 	5.33& 	4.08& 	4.01& 	4.00& 	4.00 \\
    	&CPU time (s)&	0.38& 	0.57& 	1.02& 	2.05& 	3.74& 	7.42& 	$\mathbf{14.98}$ \\
        $S_6$ & $e_\Phi(t=2)$ &	3.70E-01&	1.47E-02&	3.10E-04&	5.32E-06&	8.51E-08&	1.34E-09&	3.25E-11 \\
    	& rate & - & 4.66 &	5.57 &	5.87 &	5.96 &	5.99 &	5.36 \\
    	&CPU time (s)& 1.15 &	1.99 &	3.73 &	7.30 &	14.07 & 27.65 &	$\mathbf{56.11}$\\
       $S_{6\text{c}}$ & $e_\Phi(t=2)$ & 3.83E-01&	1.52E-02&	3.24E-04&	5.70E-06&	9.19E-08&	1.45E-09&	3.41E-11 \\
    	& rate & - & 4.66& 	5.55& 	5.83& 	5.95& 	5.99 &	5.41 \\
    	&CPU time (s)&	0.59& 	1.01& 	1.91& 	3.63& 	7.13& 	14.04 &	$\mathbf{28.45}$ \\
        \hline
        \end{tabular}}
        \caption{Temporal errors $e_\Phi(t = 2)$ of different time-splitting methods under different time steps $\tau$ for the Dirac equation (\ref{Dirac1d2d}) in 2D, with the potential given in (\ref{V}), where $\theta(t) \equiv \pi$.}
        \label{tab12}
    \end{table}

    \begin{figure}[thbp]
        \centering
        \includegraphics[width=1.0\linewidth]{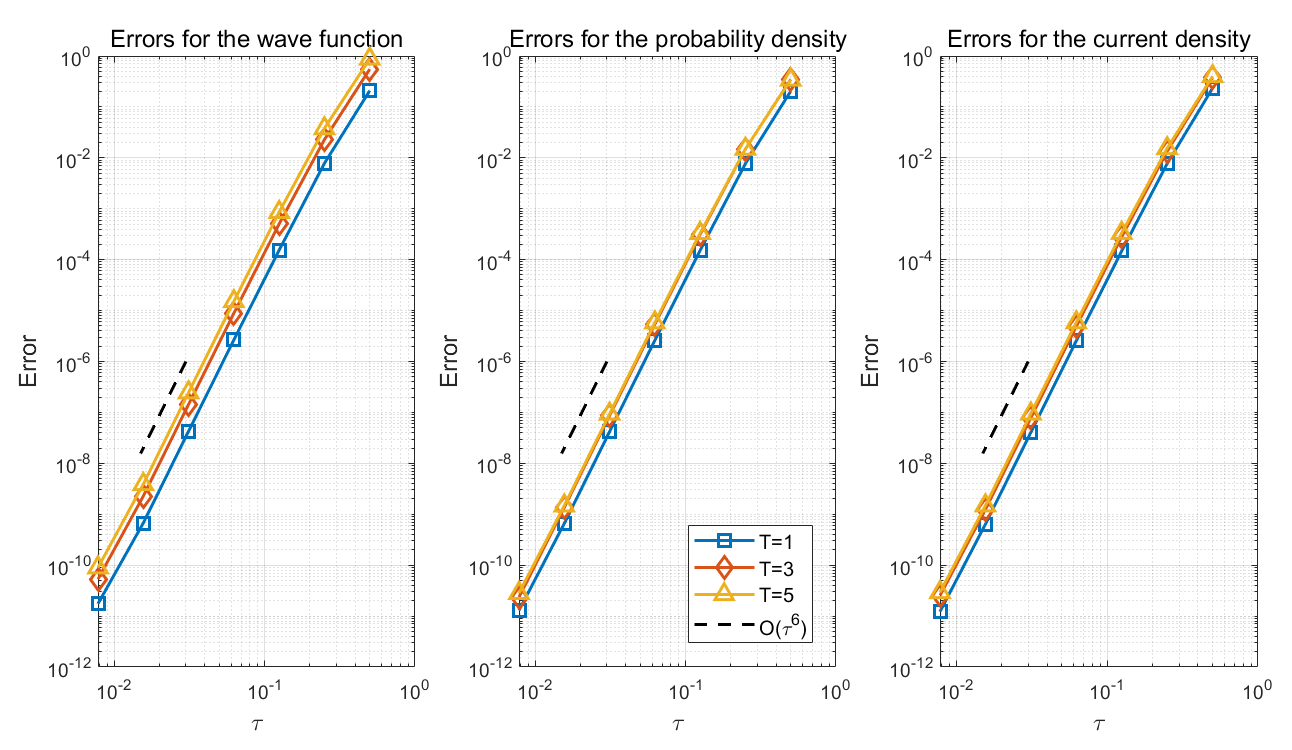}
        \caption{Temporal errors for the wave function, the probability density, and the current density with different $T$ for the Dirac equation (\ref{Dirac1d2d}) in 2D, with the potential given in (\ref{V}) where $\theta(t) \equiv \pi$.}
        \label{fig1}
    \end{figure}
    
    \noindent (1) $\theta(t) \equiv \pi$ 
    
    In this case, $\theta(t)$ is time-independent. In order to compare the spatial errors, we take the time step $\tau = \tau_e = 10^{-4}$ such that the temporal discretization error could be negligible. Table \ref{tab11} lists the numerical errors $e_\Phi(t = 2)$ for different time-splitting methods under different mesh sizes $h$. We observe that each row exhibits spectral convergence, confirming spatial accuracy of spectral order for all methods. In order to compare the temporal errors, we take the mesh size $h = h_e = 1 /32$ such that the spatial discretization error could be negligible. Table \ref{tab12} presents the numerical errors $e_\Phi(t = 2)$ for various time-splitting methods across different time steps $\tau$. 
    
    From Tables \ref{tab11} and \ref{tab12}, we have the following observations: (i) All five methods exhibit spectral accuracy in space. $S_4,S_{4\text{RK}}$ and $S_{4\text{c}}$ achieve fourth-order accuracy in time, while $S_6$ and $S_{6\text{c}}$ attain sixth-order accuracy. (ii) For any fixed mesh size $h$ and time step $\tau$: $S_4$ and $S_{4\text{RK}}$ require approximately 1.5 times and 3 times the computational time of $S_{4\text{c}}$, respectively. While $S_6$ takes roughly twice as long as $S_{6\text{c}}$. (iii) Among the three fourth-order time-splitting methods, their errors satisfy  $S_{4\text{RK}} < S_{4\text{c}} \ll S_4$. For the sixth-order methods, their errors are very close for a fixed $\tau$ and they are significantly smaller than the errors of the fourth-order methods, especially when $\tau$ is small. (iv) There is an obvious reduction in order for $S_4$ when $\tau$ is large, while the other methods maintain their respective orders. 
    
    Therefore, we can conclude that the sixth-order methods ($S_6$ and $S_{6\text{c}}$) outperform fourth-order schemes in precision. $S_{6\text{c}}$ offers a significant speedup over $S_{6}$, with execution times reduced by nearly a factor of two. This establishes $S_{6\text{c}}$ as an optimal method among all these high order splitting methods for the Dirac equations without magnetic potentials.
    
    We also exhibit the temporal errors $e_\Phi(T), e_\rho(T)$ and $e_\mathbf{J}(T)$ with different time steps for different $T$ in Figure \ref{fig1}. The results confirm that $S_{6\text{c}}$ achieves sixth-order temporal convergence for the wave function, the probability density and the current density.

    \begin{figure}[thbp]
        \centering
        \includegraphics[width=1.0\linewidth]{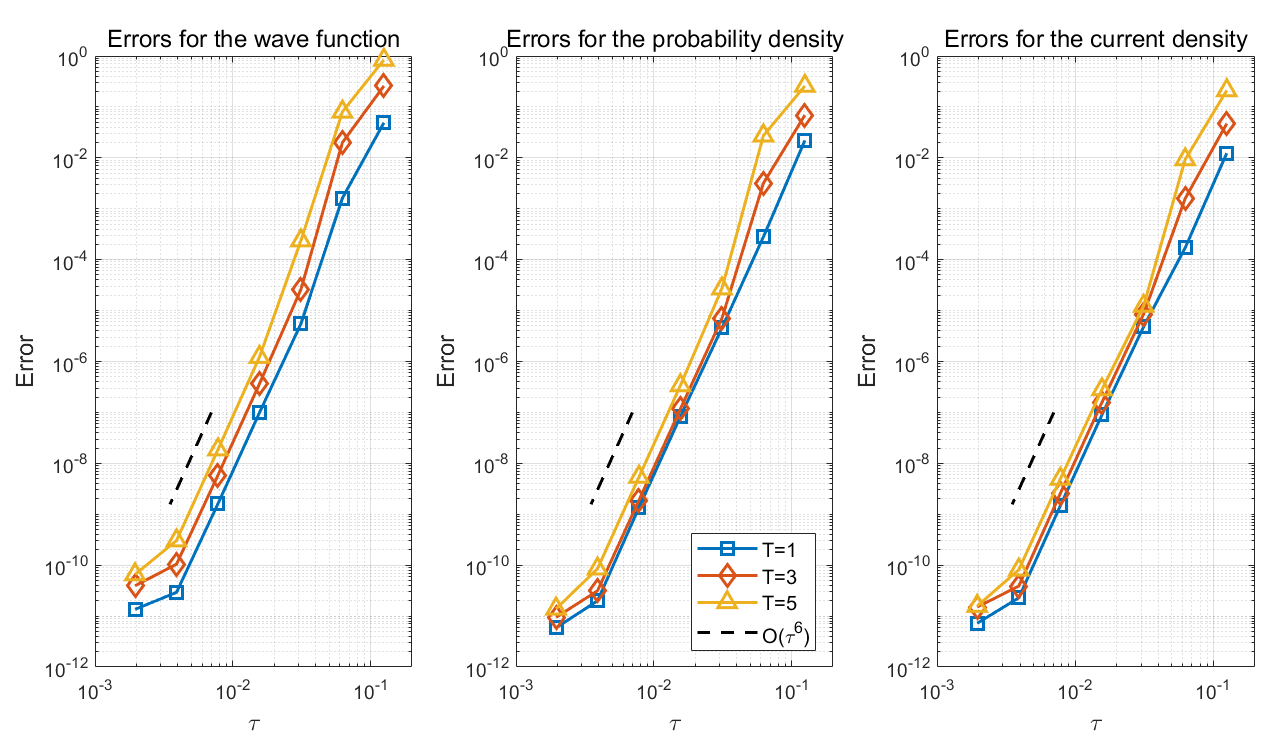}
        \caption{Temporal errors for the wave function, the probability density, and the current density with different $T$ for the Dirac equation (\ref{Dirac1d2d}) in 2D, with the potential given in (\ref{V}) where $\theta(t) = \pi + \pi t$. }
        \label{fig2}
    \end{figure}
    
    \begin{figure}[thbp]
        \centering
        \includegraphics[width=1.0\linewidth]{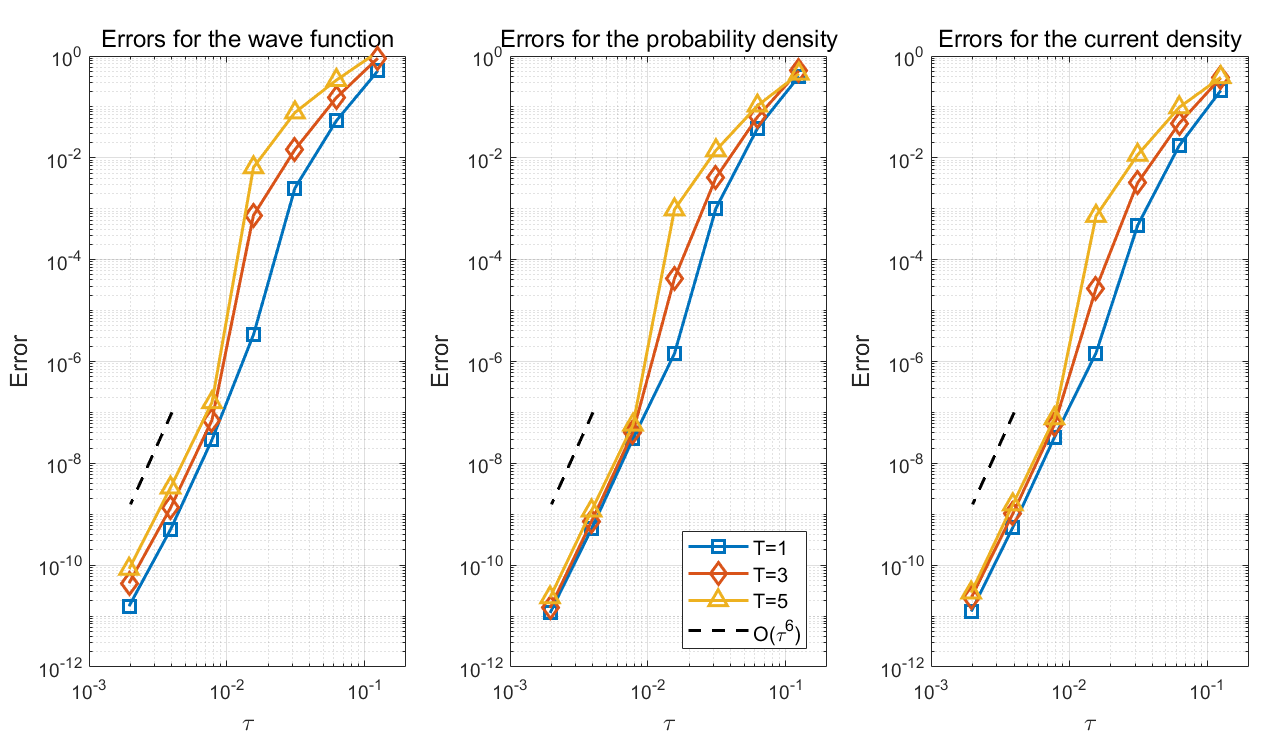}
        \caption{Temporal errors for the wave function, the probability density, and the current density with different $T$ for the Dirac equation (\ref{Dirac1d2d}) in 2D, with the potential given in (\ref{V}) where $\theta(t) = \pi + \pi \cos(\pi t)$.} 
        \label{fig3}
    \end{figure}
    
    \vspace{20pt}
    
    \noindent (2) $\theta(t) = \pi+\pi t$
    
    In this case, $\theta(t)$ is monotonically increasing, and $V(t) := V(t, \mathbf{x})$ represents a periodic electric potential function. We use $S_\text{6c}$ with time-dependent potentials developed in Section~\ref{sec:tdS6c} to propagate the wave function in time. Figure \ref{fig2} presents the three errors $e_\Phi(T)$, $e_\rho(T)$, and $e_{\mathbf{J}}(T)$ with different $T$ for this case. The figure shows that the solution exhibits sixth-order convergence in time for the wave function as well as the observables. These results confirm the efficacy of $S_{6\text{c}}$ with the time-ordering technique for solving the 2D Dirac equation with time-dependent potentials.
    
    \vspace{20pt}
    
    \noindent (3) $\theta(t) = \pi + \pi \cos(\pi t)$
    
    In this case, $\theta(t)$ is periodic in time, which generates a periodic electric potential $V(t):=V(t,\mathbf{x})$ with the same period. We still apply the $S_\text{6c}$ developed in Section~\ref{sec:tdS6c}. Figure \ref{fig3} presents the three errors $e_\Phi(T)$, $e_\rho(T)$, and $e_{\mathbf{J}}(T)$ with different $T$ for this case. The results are similar to those in case (2), which again validate the effectiveness of the $S_{6\text{c}}$ for solving the Dirac equation in 2D with time-dependent potentials.
    
    \subsection{An example in the nonrelativistic regime}
    
    In this example, we consider the Dirac equation in the nonrelativistic regime, characterized by $\delta = \nu = 1$ and $0 < \eps \ll 1$, where the speed of wave propagation is significantly slower than the speed of light. As established in \cite{Bao2020}, first- and second-order time-splitting methods exhibit super-resolution, i.e. they are uniformly convergent with respect to the small parameter $\eps$ when solving the Dirac equation without magnetic potentials in this regime. We will demonstrate through this example that $S_{6\text{c}}$ retains the super-resolution property.

    We consider the Dirac equation (\ref{Dirac1d2d}) with $d=1$ and $\delta = \nu = 1$.
    The initial condition in (\ref{initial}) is taken as
    \begin{equation}
        \phi_1(0,x) = e^{-x^2/2}, \qquad \phi_2(0,x) = e^{-(x-1)^2/2}, \qquad x\in \mathbb{R}.
    \end{equation}
    Consider the electric potential
    \begin{equation}
        V(x) = \frac{1-x}{1+x^2}, \qquad x\in \mathbb{R}.
    \end{equation}
    We also truncate the whole space onto a bounded domain $\Omega = (a, b)$ with negligible spatial error, and choose the mesh size $h:=\Delta x=\dfrac{b-a}{M}$ with $M$ an even positive integer. Then the grid points can be denoted as $x_j:=a + jh~(j=0,1,\cdots,M)$.
    
    To show the numerical results, we introduce the discrete $l_2$ errors of the numerical solution. Let $\Phi^n$ be the numerical solution obtained by a numerical method with time step $\tau$ and $\eps$ as well as a very fine mesh size $h$ at time $t = t_n$, and let $\Phi(t, x)$ be the exact solution. Then the relative discrete $l_2$ error is given by
    \begin{equation}
        e^{\eps,\tau}(t_n) = \Vert \Phi^n - \Phi(t_n,\cdot) \Vert_{l_2} = \sqrt{h \sum_{j =0}^{M-1} |\Phi(t_n,x_j)-\Phi_j^n|^2}.
    \end{equation}

     \begin{table}[t]
        \resizebox{\textwidth}{!}{
        \centering
        \begin{tabular}{cccccccc}
        \hline
             $e^{\eps,\tau}(t=2\pi)$ &  \textbf{$\tau_0 = \pi/4$} & \textbf{$\tau_0/4$} & \textbf{$\tau_0/4^2$} & \textbf{$\tau_0 /4^3$} & \textbf{$\tau_0/4^4$} & \textbf{$\tau_0/4^5$}  \\
        \hline
         $\eps_0=1/2$  & 1.32E-01 &	2.33E-04 &	6.68E-08 &	5.92E-10 &	5.91E-10 &	5.87E-10\\
         rate & - & 4.57  &	5.88 &	3.41 &	0.00 &	0.00 \\
         \hline
         $\eps_0/2$ & \textbf{3.23E-01} &	2.47E-02 &	1.08E-05 &	3.02E-09 &	6.86E-10 &	6.83E-10\\
         rate & - & 1.85 &	5.58 &	5.90 &	1.07 &	0.00 \\
         \hline
         $\eps_0/2^2$ & 1.79E-01 &	\textbf{1.56E-01} &	1.12E-02 &	6.11E-07 &	6.08E-10 &	5.72E-10\\
         rate & - & 0.10 &	1.90 &	7.08 &	4.99 &	0.04 \\
         \hline
         $\eps_0/2^3$ & 1.14E-01	& 7.47E-02 & \textbf{7.78E-02}	& 5.52E-03 &	3.72E-08 &	7.50E-10\\
         rate & - & 0.30  &	-0.03 &	1.91 &	8.59 &	2.82 \\
         \hline
         $\eps_0/2^4$ & 1.07E-01 &	3.04E-02 &	3.70E-02	& \textbf{3.89E-02} &	2.75E-03 &	2.43E-09\\
         rate & - & 0.91 &	-0.14 &	-0.04  &	1.91 &	10.06 \\
         \hline
         $\eps_0/2^5$ & 9.88E-02 &	2.26E-02 &	1.48E-02 &	1.85E-02 &	\textbf{1.94E-02} &	1.37E-03\\
         rate & - & 1.06 &	0.31 &	-0.16  & -0.03  &	1.91 \\
         \hline
         $\eps_0/2^6$ & 9.74E-02	& 1.09E-02 &	1.08E-02 & 7.36E-03 &	9.23E-03 & \textbf{9.72E-03}\\
         rate & - & 1.58 &	0.01 &	0.28 &	-0.16 &	-0.04 \\
         \hline
         $\eps_0/2^7$ & 9.72E-02 &	7.65E-03 &	4.33E-03 &	5.37E-03 &	3.68E-03 &	4.61E-03\\
         rate & - & 1.83 &	0.41 &	-0.16 &	0.27 &	-0.16 \\
         \hline
         $\eps_0/2^9$ & 9.69E-02 &	6.82E-03 &	1.27E-03 &	8.91E-04 &	1.08E-03 &	1.34E-03\\
         rate & - & 1.91  &	1.21  &	0.26 &	-0.14 &	-0.16 \\
         \hline
         \hline
         $\max_{0<\eps\le1} e^{\eps,\tau}(t=2\pi)$ & 3.23E-01	& 1.56E-01&	7.78E-02 &	3.89E-02 &	1.94E-02 &	9.72E-03\\
         rate & - & 0.52 & 0.50 & 0.50 &	0.50 & 	0.50 \\   
        \hline
        \end{tabular}}
        \caption{Discrete $l_2$ temporal errors $e^{\eps,\tau}(t = 2\pi)$ for the wave function with resonant time step size }
        \label{tab16}
    \end{table}
    
    \begin{table}[t]
        \resizebox{\textwidth}{!}{
        \centering
        \begin{tabular}{ccccccccc}
        \hline
             $e^{\eps,\tau}(t=4)$ &  \textbf{$\tau_0 = 1$} & \textbf{$\tau_0/4$} & \textbf{$\tau_0/4^2$} & \textbf{$\tau_0 /4^3$} & \textbf{$\tau_0/4^4$} & \textbf{$\tau_0/4^5$} & \textbf{$\tau_0/4^6$} \\
        \hline
         $\eps_0=1$  & 2.30E-02 & 9.32E-06 &	2.40E-09 &	2.92E-10 & 2.91E-10 &	2.90E-10 &	2.83E-10\\
         rate & - & 5.63 & 5.96 & 1.52 &	0.00 &	0.00 &	0.02 \\
         \hline
         $\eps_0/2$ & \textbf{2.39E-01} &	8.42E-04 &	2.66E-07 &	3.95E-10 &	3.88E-10 &	3.86E-10 &	3.78E-10\\
         rate & - & 4.07 &	5.81 &	4.70 &	0.01 &	0.00 &	0.02 \\
         \hline
         $\eps_0/2^2$ & 1.89E-01	& 1.69E-02	& 4.00E-05 & 	1.16E-08 &	3.75E-10 &	3.73E-10 &	3.65E-10\\
         rate & - & 1.74 &	4.36 &	5.88 &	2.48 & 	0.00 &	0.02 \\
         \hline
         $\eps_0/2^3$ & 1.36E-01 &	1.26E-02 &	1.46E-03 &	2.28E-06 &	7.41E-10 &	3.31E-10 &	3.26E-10\\
         rate & - & 1.72 &	1.55 &	4.66 &	5.79 &	0.58 &	0.01 \\
         \hline
         $\eps_0/2^4$ &1.54E-01 & 1.03E-02 &	1.65E-03 &	1.58E-04	& 1.39E-07 &	3.35E-10 &	3.21E-10\\
         rate & - & 1.95 &	1.32 &	1.69 &	5.08 &	4.35 &	0.06 \\
         \hline
         $\eps_0/2^5$ & 1.45E-01 &	1.22E-02 &	7.87E-04 &	2.23E-04 &	1.86E-05 &	8.82E-09 &	3.49E-10\\
         rate & - & 1.79 &	1.98 &	0.91 &	1.79 &	5.52 &	2.33 \\
         \hline
         $\eps_0/2^6$ & 1.39E-01 &	\textbf{2.26E-02} &	6.87E-04 &	7.50E-05 &	\textbf{2.82E-05} &	2.24E-06 &	7.21E-10\\
         rate & - & 1.31 &	2.52 &	1.60 &	0.71 &	1.83 &	5.80 \\
         \hline
         $\eps_0/2^7$ & 1.38E-01 &	1.11E-02 &	\textbf{4.19E-03} & 4.77E-05 &	8.93E-06 &	\textbf{3.63E-06} &	2.82E-07\\
         rate & - & 1.82 &	0.70 &	3.23 &	1.21 &	0.65 &	1.84 \\
         \hline
         $\eps_0/2^8$ & 1.38E-01 &	1.01E-02 &	6.85E-04 &	\textbf{2.36E-04} &	4.04E-06 &	1.10E-06 &	\textbf{4.59E-07}\\
         rate & - & 1.89 &	1.94 &	0.77 &	2.93 &	0.94 &	0.63 \\
         \hline
         $\eps_0/2^{10}$ & 1.38E-01 &	1.01E-02 &	6.36E-04 &	3.96E-05 &	3.25E-06 &	2.11E-06 &	4.84E-08\\
         rate & - & 1.89 &	1.99 & 	2.00 &	1.80  &	0.31 &	2.72 \\
         \hline
         \hline
         $\max_{0<\eps\le1} e^{\eps,\tau}(t=4)$ & 2.39E-01	& 2.26E-02 &	4.19E-03 &	2.36E-04 &	2.82E-05 &	3.63E-06 &	4.59E-07\\
         rate & - & 1.70 &	1.22 &	2.08 &	1.53 &	1.48 &	1.49\\
        \hline
        \end{tabular}}
        \caption{Discrete $l_2$ temporal errors $e^{\eps,\tau}(t = 4)$ for the wave function with nonresonant time step size }
        \label{tab17}
    \end{table}
    
    In this numerical experiment, we consider both resonant time steps (which are multiples of $\eps^2 \pi$ for some given $\eps$) and non-resonant time steps. For the resonant case, the computational domain is set to be $\Omega = (-32, 32)$, while for the non-resonant case, we use $\Omega = (-16, 16)$. In all the tests, we fix the mesh size $h = \frac{1}{16}$ to compare the temporal errors. Reference solutions are generated with $\tau_e = 2\pi \times 10^{-6}$ for the resonant case and $\tau_e = 8 \times 10^{-6}$ for the non-resonant case. Table \ref{tab16} presents numerical errors $e^{\eps,\tau}(t=2\pi)$ for the resonant case with different time steps $\tau$ and $\eps$, while Table \ref{tab17} shows errors $e^{\eps,\tau}(t=4)$ for the non-resonant case. In both tables, bold terms indicate the maximum error within each column.
    
    In Table \ref{tab16}, the last two rows give the largest error of each column, revealing at least $1/2$ order of uniform convergence under resonant time steps. Similarly, the last two rows of Table \ref{tab17} indicates at least $3/2$ order of uniform convergence under non-resonant time steps. These results verify that the developed $S_\text{6c}$ maintains the super-resolution property for solving the Dirac equation in the nonrelativistic regime. 
    
    \section{Conclusions and future work} \label{sec5}
    We proposed a novel sixth-order compact time-splitting Fourier pseudospectral method, denoted as $S_{6c}$, for solving the Dirac equation in the absence of magnetic potentials. The method is straightforward to implement. It achieves sixth-order accuracy in time and spectral precision in space. The key advantage lies in the substantial reduction in the number of required exponential operators compared to the existing high-order splitting schemes, resulting in a significant decrease in computational time. Extensive numerical tests in one- and two-dimensional spaces confirm the method's efficiency and accuracy for simulating the dynamics of the Dirac equations without magnetic potentials. Specifically, $S_{6\text{c}}$ exhibits significantly higher accuracy than the fourth-order schemes $S_4$ and $S_{4\text{c}}$, and it outperforms both the fourth-order $S_{4\text{RK}}$ and the sixth-order $S_6$ in computational efficiency. In addition, the example in the nonrelativistic regime shows that $S_{6\text{c}}$ maintains the super-resolution property of time-splitting methods.

    However, extending the ideas to construct higher-order methods, such as eighth-order compact splitting schemes, remains a challenge. This is due to the requirement of using more terms in the BCH formulas (\ref{BCH}) and (\ref{BCH2}), as well as computing higher-order commutators involving $W$ and $T$. As a result, a systematic study regarding the construction of higher-order compact time-splitting methods is left for future investigation. \\
    
\section*{Acknowledgements}
Z. He and J. Yin were partially supported by Shanghai Rising-Star Program under Grant No. 24QA2700600 and National Natural Science Foundation of China under Grant No. 12571422. 

\bibliographystyle{plain} 
\bibliography{references} 

\setcounter{equation}{0}  

\vspace{20pt}

\begin{center} \label{appA}
	{\bf Appendix A}. Computation of $V_2$
\end{center}
\setcounter{equation}{0}
\renewcommand{\theequation}{A.\arabic{equation}}

As mentioned before, we know
\begin{align*}
    V_1 =& \tau(2c_1T + c_0W) - \tau^3\frac{1}{6}[c_1T,c_1T,c_0W] \notag\\
    &+\tau^5 \left(\frac{7}{360}[c_1T,c_1T,c_1T,c_1T,c_0W] +\frac{1}{45}[c_0W,c_1T,c_1T,c_1T,c_0W] \right)+\mathcal{O}(\tau^7)
\end{align*}
Using Theorem \ref{th2} to expand
\begin{equation*}
    e^{\tau c_2 W}e^{V_1}e^{\tau c_2 W} = e^{V_2},
\end{equation*}
we have
\begin{align*} 
    V_2 =& \tau(2c_2W+2c_1T+c_0W)-\tau^3 \frac{1}{6}c_0c_1^2[T,T,W] 
    +\tau^5 \frac{7}{360}c_0c_1^4[T,T,T,T,W]  \\
    &+\tau^5 \frac{1}{45}c_0^2c_1^3[W,T,T,T,W]+ \frac{1}{6} [V_1,V_1,\tau c_2 W] - \frac{1}{6}[\tau c_2 W,\tau c_2 W,V_1]  \\
    &+ \frac{7}{360}[\tau c_2 W,\tau c_2 W,\tau c_2 W,\tau c_2 W,\tau(2c_1T+c_0W)] \\
    &- \frac{1}{360}[\tau(2c_1T+c_0W),\tau(2c_1T+c_0W),\tau(2c_1T+c_0W),\tau(2c_1T+c_0W),\tau c_2W] \\
    &+ \frac{1}{90} [\tau c_2 W,\tau(2c_1T+c_0W),\tau(2c_1T+c_0W),\tau(2c_1T+c_0W),\tau c_2 W] \\
    &+ \frac{1}{45}[\tau(2c_1T+c_0W),\tau c_2 W,\tau c_2 W,\tau c_2 W,\tau(2c_1T+c_0W)] \\
    &-\frac{1}{60}[\tau c_2 W,\tau c_2 W,\tau(2c_1T+c_0W),\tau(2c_1T+c_0W),\tau c_2 W] \\
    &+ \frac{1}{30}[\tau(2c_1T+c_0W),\tau(2c_1T+c_0W),\tau c_2 W\tau c_2 W,\tau(2c_1T+c_0W)] + \mathcal{O}(\tau^7) 
\end{align*}
Noticing that $[*,W,W] = 0,[*,T,T]=0$~($*$ is one or more operators) and according to Lemma \ref{lemma4}, removing the vanishing terms, $V_2$ simplifies to:
\begin{align}  \label{A.1}
    V_2 =& \tau(2c_2W+2c_1T+c_0W)-\tau^3 \frac{1}{6}c_0c_1^2[T,T,W] 
    +\tau^5 \frac{7}{360}c_0c_1^4[T,T,T,T,W]   \notag\\
    &+\tau^5 \frac{1}{45}c_0^2c_1^3[W,T,T,T,W]+ \frac{1}{6} [V_1,V_1,\tau c_2 W] \notag \\
    &- \frac{1}{360}[\tau(2c_1T+c_0W),\tau2c_1T,\tau2c_1T,\tau2c_1T,\tau c_2W] \notag \\
    &+ \frac{1}{90} [\tau c_2 W,\tau2c_1T,\tau2c_1T,\tau2c_1T,\tau c_2 W]  + \mathcal{O}(\tau^7)
\end{align}
where
\begin{align}
    [ V_1,V_1,\tau c_2 W] =& [\tau(2c_1T + c_0W) - \tau^3\frac{1}{6}[c_1T,c_1T,c_0W], \notag\\
    &\tau(2c_1T + c_0W) - \tau^3\frac{1}{6}[c_1T,c_1T,c_0W],\tau c_2 W ] + \mathcal{O}(\tau^7) 
\end{align}
Based on Lemma \ref{lemma3} and Lemma \ref{lemma4}, it can be observed that
\begin{align*}
    [T,[T,T,W],W] &= -[T,W,[T,T,W]] = -[T,W,T,T,W] = 0; \\
    [W,[T,T,W],W] &= -[W,W,[T,T,W]] = -[W,W,T,T,W] = 0.
\end{align*}
and $[*,W,W] = 0$($*$ is one or more operators), thus
\begin{align}
    [ V_1,V_1,\tau c_2 W] =& [\tau(2c_1T + c_0W) - \tau^3\frac{1}{6}[c_1T,c_1T,c_0W],\tau 2c_1T,\tau c_2 W] + \mathcal{O}(\tau^7)\notag\\
    =& [\tau(2c_1T + c_0W),\tau 2c_1T,\tau c_2 W]\notag\\
    &-[ \tau^3\frac{1}{6}[c_1T,c_1T,c_0W],\tau 2c_1T,\tau c_2 W] + \mathcal{O}(\tau^7)\notag\\
    =& \tau^3 4c_1^2c_2[T,T,W] + \tau^5 (-\frac{1}{3})c_0c_1^3c_2[[T,T,W],T,W] + \mathcal{O}(\tau^7)
\end{align}
Again based on Lemma \ref{lemma3} and Lemma \ref{lemma4}, we know
\begin{equation*}
    [[T,T,W],T,W]+[T,W,[T,T,W]]+[W,[T,T,W],T] = 0,
\end{equation*}
and
\begin{equation*}
    [T,W,[T,T,W]] = [T,W,T,T,W] =0,
\end{equation*}
thus
\begin{align*}
    [W,[T,T,W],T] = -[W,T,[T,T,W]] = -[W,T,T,T,W],
\end{align*}
therefore 
\begin{equation}  \label{A.4}
    [[T,T,W],T,W] = [W,T,T,T,W].
\end{equation}
It is clear that $[T,T,W]=-[T,W,T]$, then
\begin{equation} \label{A.5}
    [ V_1,V_1,\tau c_2 W] = \tau^3 (-4)c_1^2c_2[T,W,T] + \tau^5 (-\frac{1}{3})c_0c_1^3c_2[W,T,T,T,W] + \mathcal{O}(\tau^7)
\end{equation}
Plugging (\ref{A.5}) into (\ref{A.1}), this reduces to:
\begin{align}  \label{A.6}
    V_2 =& \tau\left(2c_1 T + (c_0 + 2c_2)W \right) +\tau^3 \left(\frac{1}{6} c_1^2(c_0-4c_2)[T,W,T] \right) \notag\\
    &+\tau^5 (\frac{1}{45}c_0^2c_1^3-\frac{1}{18}c_0c_1^3c_2-\frac{1}{45}c_0c_1^3c_2+\frac{4}{45}c_1^3c_2^2)[W,T,T,T,W] \notag \\
    &+\tau^5 (\frac{7}{360}c_0c_1^4-\frac{2}{45}c_1^4c_2)[T,T,T,T,W] + \mathcal{O}(\tau^7)
\end{align}
which is exactly (\ref{V2}).

\vspace{80pt}

\begin{center} \label{appB}
	{\bf Appendix B}. Computation of $V_3$
\end{center}
\setcounter{equation}{0}

\renewcommand{\theequation}{B.\arabic{equation}}

Applying Theorem \ref{th2} to 
\begin{equation*}
    e^{\tau c_3 T}e^{V_2}e^{\tau c_3 T} =e^{V_3}
\end{equation*}
we obtain
\begin{align*}
    V_3 =&  \tau(2(c_3+c_1)T+(c_0+2c_2)W)+\tau^3 \left(\frac{1}{6} c_1^2(c_0-4c_2)[T,W,T] \right) \\
    &+\tau^5(\frac{1}{45}c_0^2c_1^3-\frac{1}{18}c_0c_1^3c_2-\frac{1}{45}c_0c_1^3c_2+\frac{4}{45}c_1^3c_2^2)[W,T,T,T,W]  \\
    &+\tau^5(\frac{7}{360}c_0c_1^4-\frac{2}{45}c_1^4c_2)[T,T,T,T,W] + \frac{1}{6} [V_2,V_2,\tau c_3 T] - \frac{1}{6}[\tau c_3 T,\tau c_3 T,V_2] \\
    & + \frac{7}{360}[\tau c_3 T,\tau c_3 T,\tau c_3 T,\tau c_3 T,\tau(2c_1T+(c_0+2c_2)W)] \\
    &- \frac{1}{360}[\tau(2c_1T+(c_0+2c_2)W),\tau(2c_1T+(c_0+2c_2)W),...\\
    &~~~~~~~~~~~~\tau(2c_1T+(c_0+2c_2)W),\tau(2c_1T+(c_0+2c_2)W),\tau c_3T] \\
    &+ \frac{1}{90} [\tau c_3T,\tau(2c_1T+(c_0+2c_2)W),\tau(2c_1T+(c_0+2c_2)W),...\\
    &~~~~~~~~~~\tau(2c_1T+(c_0+2c_2)W),\tau c_3T] \\
    &+ \frac{1}{45}[\tau(2c_1T+(c_0+2c_2)W),\tau c_3T,\tau c_3T,\tau c_3T,\tau(2c_1T+(c_0+2c_2)W)] \\
    &-\frac{1}{60}[\tau c_3T,\tau c_3T,\tau(2c_1T+(c_0+2c_2)W)),\tau(2c_1T+(c_0+2c_2)W),\tau c_3T] \\
    &+ \frac{1}{30}[\tau(2c_1T+(c_0+2c_2)W),\tau(2c_1T+(c_0+2c_2)W),...\\
    &~~~~~~~~~~~\tau c_3T,\tau c_3T,\tau(2c_1T+(c_0+2c_2)W)] + \mathcal{O}(\tau^7) 
\end{align*}

Noticing that $[*,W,W] = 0,[*,T,T]=0$~($*$ is one or more operators) and applying Lemma \ref{lemma4}, $V_3$ can be reformulated by excluding null entries:
\begin{align}  \label{B.1}
    V_3 =&  \tau(2(c_3+c_1)T+(c_0+2c_2)W)+\tau^3 \left(\frac{1}{6} c_1^2(c_0-4c_2)[T,W,T] \right)\notag\\
    &+\tau^5 (\frac{7}{360}c_0c_1^4-\frac{2}{45}c_1^4c_2)[T,T,T,T,W] \notag \\
    &+\tau^5(\frac{1}{45}c_0^2c_1^3-\frac{1}{18}c_0c_1^3c_2
    - \frac{1}{45}c_0c_1^3c_2+\frac{4}{45}c_1^3c_2^2)[W,T,T,T,W]  \notag\\
    &+ \frac{1}{6} [V_2,V_2,\tau c_3 T] - \frac{1}{6}[\tau c_3 T,\tau c_3 T,V_2] \notag\\
    &+ \frac{7}{360}[\tau c_3 T,\tau c_3 T,\tau c_3 T,\tau c_3 T,\tau(c_0+2c_2)W] \notag\\
    &- \frac{1}{360}[\tau(2c_1T+(c_0+2c_2)W),\tau 2c_1T,\tau 2c_1T,\tau(c_0+2c_2)W,\tau c_3T] \notag\\
    &+ \frac{1}{90} [\tau c_3T,\tau 2c_1T,\tau 2c_1T,\tau(c_0+2c_2)W,\tau c_3T] \notag\\
    &+ \frac{1}{45}[\tau(2c_1T+(c_0+2c_2)W),\tau c_3T,\tau c_3T,\tau c_3T,\tau(c_0+2c_2)W] \notag \\
    &-\frac{1}{60}[\tau c_3T,\tau c_3T,\tau 2c_1T,\tau(c_0+2c_2)W,\tau c_3T] \notag\\
    &+ \frac{1}{30}[\tau(2c_1T+(c_0+2c_2)W),\tau 2c_1T,\tau c_3T,\tau c_3T,\tau (c_0+2c_2)W] + \mathcal{O}(\tau^7) 
\end{align}
where
\begin{align}
    [V_2,V_2,\tau c_3 T]
    =& [\tau\left(2c_1 T + (c_0 + 2c_2)W \right) +\tau^3 \left(\frac{1}{6} c_1^2(c_0-4c_2)[T,W,T] \right),... \notag\\
    & ~~~\tau\left(2c_1 T + (c_0 + 2c_2)W \right) +\tau^3 \left(\frac{1}{6} c_1^2(c_0-4c_2)[T,W,T] \right),\tau c_3 T] \notag\\
    &+\mathcal{O}(\tau^7)  \notag\\
    =& [\tau\left(2c_1 T + (c_0 + 2c_2)W \right) +\tau^3 \left(\frac{1}{6} c_1^2(c_0-4c_2)[T,W,T] \right),... \notag\\
    & ~~~ \tau (c_0 + 2c_2)W  +\tau^3 \left(\frac{1}{6} c_1^2(c_0-4c_2)[T,W,T] \right),\tau c_3 T]+\mathcal{O}(\tau^7) \notag\\
    =& [\tau\left(2c_1 T + (c_0 + 2c_2)W \right),\tau (c_0 + 2c_2)W,\tau c_3 T] \notag\\
    &+ [\tau\left(2c_1 T + (c_0 + 2c_2)W \right),\tau^3 \left(\frac{1}{6} c_1^2(c_0-4c_2)[T,W,T] \right),\tau c_3 T]\notag\\
    &+ [\tau^3 \left(\frac{1}{6} c_1^2(c_0-4c_2)[T,W,T] \right),\tau (c_0 + 2c_2)W,\tau c_3 T] + \mathcal{O}(\tau^7)\notag\\
    =& \tau^3 2c_1c_3(c_0+2c_2)[T,W,T] + \tau^5 \Big( 
    \frac{1}{3}c_1^3c_3(c_0-4c_2))[T,[T,W,T],T] \notag\\
    &+ \frac{1}{6}c_1^2c_3(c_0+2c_2)(c_0-4c_2)[W,[T,W,T],T] \notag\\
    &+ \frac{1}{6}c_1^2c_3(c_0-4c_2)(c_0+2c_2)[[T,W,T],W,T]\Big) + \mathcal{O}(\tau^7)
\end{align}

By the properties of commutators and (\ref{A.4}), we establish:
\begin{align*}
    [T,[T,W,T],T] &= -[T,T,[T,W,T]] = [T,T,T,T,W], \\
    [W,[T,W,T],T] &= -[W,T,[T,W,T]] = [W,T,T,T,W], \\
    [[T,W,T],W,T] &= -[[T,T,W],W,T] = [[T,T,W],T,W] = [W,T,T,T,W].
\end{align*}
Thus
\begin{align}   \label{B.3}
    [V_2,V_2,\tau c_3 T] =& \tau^3 2c_1c_3(c_0+2c_2)[T,W,T] + \tau^5 \Big( 
    \frac{1}{3}c_1^3c_3(c_0-4c_2))[T,T,T,T,W] \notag\\
    &+ \frac{1}{6}c_1^2c_3(c_0+2c_2)(c_0-4c_2)[W,T,T,T,W]\notag\\
    &+ \frac{1}{6}c_1^2c_3(c_0-4c_2)(c_0+2c_2)[W,T,T,T,W]
    \Big)  + \mathcal{O}(\tau^7) \notag\\
    =& \tau^3 2c_1c_3(c_0+2c_2)[T,W,T] + \tau^5 \Big( 
    \frac{1}{3}c_1^3c_3(c_0-4c_2))[T,T,T,T,W]  \notag\\
    &+ \frac{1}{3}c_1^2c_3(c_0+2c_2)(c_0-4c_2)[W,T,T,T,W] \Big) + \mathcal{O}(\tau^7)
\end{align}

\begin{align} \label{B.4}
    [\tau c_3 T,\tau c_3 T,V_2] =& [\tau c_3 T,\tau c_3 T,\tau\left(2c_1 T + (c_0 + 2c_2)W \right)\notag\\ 
    &+\tau^3 \left(\frac{1}{6} c_1^2(c_0-4c_2)[T,W,T] \right)]+\mathcal{O}(\tau^7) \notag\\
    =& [\tau c_3 T,\tau c_3 T,\tau\left(2c_1 T + (c_0 + 2c_2)W \right)] \notag\\
    &+ [\tau c_3 T,\tau c_3 T,\tau^3 \left(\frac{1}{6} c_1^2(c_0-4c_2)[T,W,T] \right)] + \mathcal{O}(\tau^7) \notag\\
    =& \tau^3(c_0+2c_2)c_3^2[T,T,W] \notag\\
    &+ \tau^5 \frac{1}{6} c_1^2c_3^2(c_0-4c_2)[T,T,[T,W,T]] + \mathcal{O}(\tau^7) \notag\\
    =& -\tau^3(c_0+2c_2)c_3^2[T,W,T] \notag\\
    &+ \tau^5 (-\frac{1}{6})c_1^2c_3^2(c_0-4c_2)[T,T,T,T,W] + \mathcal{O}(\tau^7) 
\end{align}

Plugging (\ref{B.3}) and (\ref{B.4}) into (\ref{B.1}), this reduces to:
\begin{align} \label{B.5}
    V_3 =& \tau \left(2(c_1+c_3)T+(c_0+2c_2)W \right) \notag\\
         &+ \tau^3 \left( \frac{1}{6}c_1^2(c_0-4c_2)+\frac{1}{3}c_1c_3(c_0+2c_2)+\frac{1}{6}c_3^2(c_0+2c_2) \right)[T,W,T] \notag\\
         &+ \tau^5 \Big( c_1^4(\frac{7}{360}c_0-\frac{2}{45}c_2)+\frac{c_1^3c_3(c_0-4c_2)}{18}+\frac{c_1^2c_3^2(c_0-4c_2)}{36}+\frac{c_1^3c_3(c_0+2c_2)}{45}\notag\\
         &~~~~ +\frac{4}{45}c_1^2c_3^2(c_0+2c_2)+\frac{7}{90}c_1c_3^3(c_0+2c_2)+\frac{7}{360}c_3^4(c_0+2c_2)         
         \Big)[T,T,T,T,W] \notag\\
         &+ \tau^5 \Big( c_1^3(\frac{1}{45}c_0^2-\frac{7}{90}c_0c_2+\frac{4}{45}c_2^2)+\frac{1}{18}c_1^2c_3(c_0+2c_2)(c_0-4c_2) \notag\\
         &~~~~+\frac{1}{90}c_1^2c_3(c_0+2c_2)^2+\frac{1}{45}c_3^3(c_0+2c_2^2)\notag\\
         &~~~~+\frac{1}{15}c_1c_3^2(c_0+2c_2)^2
         \Big)[W,T,T,T,W] + \mathcal{O}(\tau^7)
\end{align}

\vspace{20pt}

\begin{center} \label{appC}
	{\bf Appendix C}. Computation of $V_4$
\end{center}
\setcounter{equation}{0}
\renewcommand{\theequation}{C.\arabic{equation}}

Applying Theorem \ref{th2} to 
\begin{equation*}
    e^{\tau c_4 W}e^{V_3}e^{\tau c_4 W} =e^{V_4}
\end{equation*}
we have
\begin{align*}
    V_4 =& \tau \left(2(c_1+c_3)T+(c_0+2c_2+2c_4)W \right) + \tau^3 \alpha_3[T,W,T] \\
    &+ \tau^5 (\alpha_4[T,T,T,T,W] + \alpha_5[W,T,T,T,W]) \\
    &+ \frac{1}{6} [V_3,V_3,\tau c_4 W] - \frac{1}{6}[\tau c_4 W,\tau c_4 W,V_3] \\
    &+ \frac{7}{360}[\tau c_4 W,\tau c_4 W,\tau c_4 W,\tau c_4 W,\tau \left(2(c_1+c_3)T+(c_0+2c_2)W \right)] \\
    &- \frac{1}{360}[\tau \left(2(c_1+c_3)T+(c_0+2c_2)W \right),\tau \left(2(c_1+c_3)T+(c_0+2c_2)W \right),...\\
    &~~~~~~~~~~~\tau \left(2(c_1+c_3)T+(c_0+2c_2)W \right),\tau \left(2(c_1+c_3)T+(c_0+2c_2)W \right),\tau c_4W] \\
    &+ \frac{1}{90} [\tau c_4 W,\tau \left(2(c_1+c_3)T+(c_0+2c_2)W \right),\tau \left(2(c_1+c_3)T+(c_0+2c_2)W \right),...\\
    &~~~~~~~~~~\tau \left(2(c_1+c_3)T+(c_0+2c_2)W \right),\tau c_4 W] \\
    &+ \frac{1}{45}[\tau \left(2(c_1+c_3)T+(c_0+2c_2)W \right),\tau c_4 W,\tau c_4 W,...\\
    &~~~~~~~~~~\tau c_4 W,\tau \left(2(c_1+c_3)T+(c_0+2c_2)W \right)] \\
    &-\frac{1}{60}[\tau c_4 W,\tau c_4 W,\tau \left(2(c_1+c_3)T+(c_0+2c_2)W \right),...\\
    &~~~~~~~~~~\tau \left(2(c_1+c_3)T+(c_0+2c_2)W \right),\tau c_4 W] \\
    &+ \frac{1}{30}[\tau \left(2(c_1+c_3)T+(c_0+2c_2)W \right),\tau \left(2(c_1+c_3)T+(c_0+2c_2)W \right),...\\
    &~~~~~~~~~~\tau c_4 W,\tau c_4 W,tau \left(2(c_1+c_3)T+(c_0+2c_2)W \right)] + \mathcal{O}(\tau^7) 
\end{align*}
Noticing that $[*,W,W] = 0,[*,T,T]=0$~($*$ is one or more operators) and applying Lemma \ref{lemma4}, by discarding the vanishing terms and reorganizing $V_4$, we obtain
\begin{align} \label{C.1}
    V_4 =& \tau \left(2(c_1+c_3)T+(c_0+2c_2+2c_4)W \right) + \tau^3 \alpha_3[T,W,T]\notag\\
    &+ \tau^5 (\alpha_4[T,T,T,T,W] + \alpha_5[W,T,T,T,W])+ \frac{1}{6} [V_3,V_3,\tau c_4 W] \notag\\
    & - \frac{1}{360}[\tau \left(2(c_1+c_3)T+(c_0+2c_2)W \right),\tau 2(c_1+c_3)T ,\tau 2(c_1+c_3)T,...\notag\\
    &~~~~~~~~~~~~\tau 2(c_1+c_3)T,\tau c_4W] \notag\\
    &+ \frac{1}{90} [\tau c_4 W,\tau 2(c_1+c_3)T,\tau 2(c_1+c_3)T,\tau 2(c_1+c_3)T,\tau c_4 W]  + \mathcal{O}(\tau^7)
\end{align}
where
\begin{align} \label{C.2}
    [V_3,V_3,\tau c_4 W] =& [\tau 2(c_1+c_3)T,\tau 2(c_1+c_3)T,\tau c_4 W] \notag\\
    &+ [\tau^3 \alpha_3[T,W,T],\tau 2(c_1+c_3)T,\tau c_4 W] \notag\\
    =& \tau^3 (-4)(c_1+c_3)^2c_4[T,W,T] + \tau^5 (-2)\alpha_3 (c_1+c_3)c_4[W,T,T,T,W]
\end{align}
The computation of (\ref{C.2}) here is entirely analogous to that of $[V_1, V_1, \tau c_2 W]$ in Appendix A. Substituting (\ref{C.2}) into (\ref{C.1}) and simplifying, we obtain
\begin{align}
    V_4 =& \tau(2(c_1+c_3)T+(c_0+2c_2+2c_4)W) + \tau^3\beta_3 [T,W,T] \notag\\
    &+\tau^5(\beta_4[T,T,T,T,W]+\beta_5[W,T,T,T,W])
    +\mathcal{O}(\tau^7),
\end{align}
where $\beta_3,\beta_4,\beta_5$ are shown in Table \ref{tab12}.

\end{document}